\newcommand{\Cl}{\operatorname{Cl}}
\newcommand{\Supp}{\operatorname{Supp}}
\newcommand{\Sing}{\operatorname{Sing}}
\newcommand{\red}{\operatorname{red}}
\newcommand{\wt}{\operatorname{wt}}
\newcommand{\ord}{\operatorname{ord}}
\newcommand{\rk}{\operatorname{rk}}
\newcommand{\gr}{\operatorname{gr}}
\newcommand{\cd}{\operatorname{cdisc}}
\newcommand{\dd}{\operatorname{d}}
\newcommand{\F}{\operatorname{F}}
\newcommand{\unit}{\operatorname{unit}}
\newcommand{\toplus}{\mathbin{\tilde\oplus}}
\newcommand{\totimes}{\mathbin{\tilde\otimes}}
\newcommand{\CC}{\mathbb{C}}
\newcommand{\ZZ}{\mathbb{Z}}
\newcommand{\PP}{\mathbb{P}}
\newcommand{\QQ}{\mathbb{Q}}
\newcommand{\lin}{\text{---}}
\newcommand{\KKK}{{\mathscr{K}}}
\newcommand{\OOO}{\mathscr{O}}
\newcommand{\muu}{{\boldsymbol{\mu}}}
\newcommand{\type}[1]{$\mathrm{#1}$}
\newtheorem{thm}[subsection]{}
\newtheorem{sthm}[equation]{}
\newtheorem{ssthm}{}[equation]
\newtheorem*{lemma*}{Lemma}
\theoremstyle{definition}
\newtheorem{de}[subsection]{}
\newtheorem{sde}[equation]{}
\newtheorem{ssde}[ssthm]{}
\newtheorem*{computation*}{Computation}
\newtheorem*{remark*}{Remark}
\newcounter{eqnumerc}[equation]
\newcommand{\xref}[1]{{\rm \ref{#1}}}
\title{Threefold extremal contractions \\ of types \type{(IC)} and \type{(IIB)}}
\author{Shigefumi Mori}
\address{Shigefumi Mori: RIMS, 
Kyoto University, Oiwake-cho, Kitashirakawa, Sakyo-ku, Kyoto
606-8502, Japan}
\email{mori@kurims.kyoto-u.ac.jp}
\author{Yuri Prokhorov}
\thanks{
The first author's work  partially supported by Japan Society for the Promotion of Science Grant-in-Aid for
Scientific Research (B)(2), No. 20340005.
\newline\indent
The second author's work partially supported by 
RFBR grants No. 11-01-00336-a, 11-01-92613-KO\_a, the grant of
Leading Scientific Schools No. 4713.2010.1 and 
AG Laboratory SU-HSE, RF government 
grant ag. 11.G34.31.0023.
}
\address{Yuri Prokhorov: Department 
of Algebra, Faculty of Mathematics, Moscow State
University, Moscow 117234, Russia
\newline\indent
Laboratory of Algebraic Geometry, SU-HSE, 
7 Vavilova Str., Moscow 117312, Russia
}
\email{prokhoro@gmail.com}
\begin{document}

\maketitle
\begin{flushright}
\end{flushright}

\begin{abstract} 
Let $(X,C)$ be a germ of a threefold $X$ with terminal singularities
along an irreducible reduced complete curve $C$
with a contraction $f: (X,C)\to (Z,o)$ 
such that $C=f^{-1}(o)_{\red}$ and $-K_X$
is ample. Assume that $(X,C)$ contains a point 
of type \type{(IC)} or \type{(IIB)}. 
We complete the classification of such germs
in terms of a general member 
$H\in |\OOO_X|$ containing $C$. 
\end{abstract}

\section{Introduction}
\begin{de} 
\label{setup}
Let $(X,C)$ be a germ of a threefold with terminal singularities
along an reduced complete curve. We say that $(X,C)$
is an \textit{extremal curve germ} if 
there is a contraction $f: (X,C)\to (Z,o)$ such that
$C=f^{-1}(o)_{\red}$ and $-K_X$
is $f$-ample.  

If furthermore $f$ is birational, then $(X,C)$ is said to be 
an \textit{extremal neighborhood} \cite{Mori-1988}.
In this case $f$ is called \textit{flipping} if 
its exceptional locus coincides with $C$ 
(and then $(X,C)$ is called \textit{isolated}).
Otherwise the exceptional locus of $f$ is two-dimensional
and $f$ is called \textit{divisorial}.
If $f$ is not birational, then $\dim Z=2$ and 
$(X,C)$ is said to be 
a \textit{$\QQ$-conic bundle germ} \cite{Mori-Prokhorov-2008}.

\end{de}
\begin{de}
In this paper we consider only extremal curve germs with irreducible 
central fiber $C$.
For each singular point $P$ of $X$ with $P \in C$, consider
the germ $(P \in C' \subset X)$. All such germs are classified into  types 
\type{IA}, \type{IC}, \type{IIA}, \type{IIB}, 
\type{IA^\vee},  \type{II^\vee}, \type{ID^\vee}, \type{IE^\vee},
and \type{III}, 
whose definitions we 
refer the reader to \cite{Kollar-Mori-1992} and \cite{Mori-Prokhorov-2008}.

In this paper we complete the classification of  extremal curve germs 
with irreducible central fiber containing 
points of type   \type{IC} or \type{IIB}.
As in \cite{Kollar-Mori-1992} and \cite{Mori-Prokhorov-2010} 
the classification is done in terms of
a general hyperplane section, 
that is, a general divisor $H$ of $|\OOO_X|_C$, 
the linear subsystem of $|\OOO_X|$ consisting of sections  containing $C$.

For a normal surface $S$ and a curve $V\subset S$,
we use the usual notation of graphs $\Delta (S,V)$
of the minimal resolution of $S$ near $V$:
each $\diamond$ corresponds to an irreducible component of $V$ and each
$\circ$ corresponds to an exceptional divisor on the minimal
resolution of $S$, and we may use $\bullet$ instead of $\diamond$ if we want 
to emphasize that it is a complete $(-1)$-curve.
A number attached to a vertex denotes the minus self-intersection number.
For short, we may omit $2$ if the self-intersection is $-2$.

Recall that  if an  extremal curve germ $(X,C\simeq \PP^1)$ contains 
a point   of type   \type{IC}, then $(X,C)$ is not divisorial \cite[Cor. 8.3.3]{Kollar-Mori-1992}.
For the  remaining $\QQ$-conic bundle case we prove the following.
\end{de}

\begin{thm}\label{IC-main}{\bf Theorem.}
Let $(X,C)$ is a $\QQ$-conic bundle germ of type \type{(IC)}
with irreducible $C$ and let $f:(X,C)\to (Z,o)$ be the corresponding contraction.
Let $P\in X$ be \textup(a unique\textup) singular point.
Then we have:

\begin{sthm}
\label{IC-(8.3.2)}
The point $P\in X$ is of index $5$. 
Moreover,
the general member $H\in |\OOO_X|_C$ is normal, smooth outside of $P$,
has only rational singularities, and the following is the only possibility
for the dual graph of $(H,C)$\textup:
\begin{equation*} 
\begin{array}{c@\,c@\,c@\,c@\,c@\,c@\,c@\,c@\,c@\,c@\,c@\,c@\,c}
&&&&& &&& \overset3{\circ} 
\\[-4pt]
&&&&& &&&| 
\\ [-4pt]
&&&&& &\circ& &\circ 
\\ [-4pt]
&&&&& & | && | 
\\[-4pt] 
\bullet&\lin& \circ&\lin&\circ&\lin&\circ&\lin
&\underset3{\circ}&\lin&\underset3{\circ}&\lin&\circ 
\end{array}
\end{equation*} 
\end{sthm}
\end{thm}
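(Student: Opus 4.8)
The plan is to combine the local classification of type \type{(IC)} points with the global constraints imposed by the $\QQ$-conic bundle structure, and then to read the dual graph off an explicit resolution. First I would recall the local normal form of the germ $(P\in C\subset X)$ at a type \type{(IC)} point from \cite{Kollar-Mori-1992} and \cite{Mori-Prokhorov-2008}: pass to the index-one cover $\pi\colon (P^\sharp\in X^\sharp)\to (P\in X)$, a cyclic $\muu_m$-cover with $X^\sharp$ a compound Du Val point, and record that \type{(IC)} is the imprimitive case, in which $C^\sharp=\pi^{-1}(C)_{\red}$ is reducible. This reduces the candidate germs to a short list parametrized by the index $m$ and a few discrete invariants of the $\muu_m$-action and of $X^\sharp$.

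Next I would pin down $m=5$. Here I would feed that short list into the global hypotheses — $C\simeq\PP^1$, $-K_X$ is $f$-ample, $\dim Z=2$, and $P$ the unique singular point — and apply Reid's singular Riemann--Roch together with the structure theory of $\QQ$-conic bundle germs (notably that the base $(Z,o)$ is Du Val of type $A$, or smooth). The resulting numerical identities for $-K_X\cdot C$, for $\chi(\OOO_X)$, and for the contribution of the single basket point should eliminate every index except $m=5$ and simultaneously rigidify the normal form. This step is the $\QQ$-conic bundle analog of the flipping computation in \cite[(8.3.2)]{Kollar-Mori-1992}, which I would use as a template.

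With the normal form fixed, I would construct $H$. Normality follows from Serre's criterion: $H$ is a Cartier divisor on the Cohen--Macaulay threefold $X$, hence $S_2$, while a Bertini argument away from the base curve $C$ together with a direct check along $C\setminus\{P\}$ yields $R_1$, so $H$ is smooth outside $P$; that $H$ carries only a rational singularity at $P$ is then read off from the local equation. For the dual graph I would resolve $(H,P)$ minimally — most efficiently by resolving the compound Du Val cover $X^\sharp$ torically or by weighted blow-ups and descending to the $\muu_5$-quotient — track the strict transform of $C$ as the $\bullet$ $(-1)$-curve, and compute all self-intersections, obtaining the displayed tree. As a consistency check I would verify by adjunction that the different of $C$ in $H$ at $P$ equals $\tfrac{m-1}{m}=\tfrac45$, which both confirms $m=5$ and constrains the graph.

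The main obstacle is the middle step: proving the index is exactly $5$ and that the germ is rigid enough to force a \emph{single} graph. This needs the full cohomological and deformation-theoretic machinery — vanishing theorems, the general elephant, and the orbifold Riemann--Roch bookkeeping — rather than a bare computation, and the delicate point is controlling every discrete choice in the normal form so as to establish that the stated configuration is the only possibility. By contrast, once the normal form is rigidified, the explicit resolution producing the self-intersection numbers in the last step is laborious but routine.
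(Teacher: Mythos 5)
Your outline identifies the right landmarks (local normal form, pinning the index, constructing $H$, resolving), but the step you yourself flag as the main obstacle is exactly where the proposal fails, and the tool you propose for it cannot work. Eliminating $m\geq 7$ and forcing a single graph is not a matter of singular Riemann--Roch or basket bookkeeping: all germs of type \type{(IC)} of a fixed index $m$ have the same singularity $\frac1m(2,m-2,1)$, hence the same basket, the same $\chi(\OOO_X)$ and the same $-K_X\cdot C=5/m$, while the answer is governed by analytic coefficients of the germ $(X,C)$ --- whether $\lambda_1(P)$ vanishes in the $\ell$-basis $u=\lambda_1y_1^{(m-5)/2}y_4+\mu_1(y_1^{m-2}-y_2^2)$ of the unique $(4P^\sharp)\subset\gr^1_C\OOO$, and whether $\alpha(P)$ vanishes in the generator $y_1^2u+\alpha y_2y_4^2$ of $\gr^0_CJ$. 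These are invisible to any Riemann--Roch computation. The paper's actual mechanism is: Proposition \xref{IC-Proposition-(8.7)}, a cohomological computation with $C$-laminal ideals whose only numerical input is $-K_X\cdot C=5/m<2=-K_X\cdot f^{-1}(o)$, shows $\alpha(P)\neq 0$ whenever $m\geq 7$; then each of the three cases $\{m\geq7\}$, $\{m=5,\ \lambda_1(P)\neq0\}$, $\{m=5,\ \lambda_1(P)=0,\ \alpha(P)\neq0\}$ is excluded by applying the normal forms of \cite[10.7]{Kollar-Mori-1992} to a general $H\in|\OOO_X|_C$ (built from the explicit sections $s_1,s_2$ of Proposition \xref{IC-(8.8)}) and concluding that $f_H\colon H\to T$ would then be birational, contradicting the $\QQ$-conic bundle hypothesis. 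Note in particular that even granting $m=5$, two of the three $m=5$ subcases still have to be killed this way --- a step your proposal never addresses, since their numerical invariants are identical. In the surviving case $m=5$, $\lambda_1(P)=\alpha(P)=0$, the graph is obtained from \cite[10.8]{Kollar-Mori-1992}, but only after Lemmas \xref{IC-last-lemma-1} and \xref{IC-last-lemma-2} force the terms $y_2^3y_4$ and $y_1y_4^3$ into the equation of $H$; so even the ``routine resolution'' at the end needs nontrivial input that your plan does not produce.

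Two further corrections. First, type \type{(IC)} is locally \emph{primitive} and $C^\sharp=\{y_1^{m-2}-y_2^2=y_4=0\}$ is \emph{irreducible} (a singular planar curve, since $m-2$ is odd); the imprimitive types are those marked with $\vee$. Your reduction ``to a short list'' thus starts from a wrong description of the local structure. Second, your order of logic for rationality is reversed: in the paper, normality, smoothness outside $P$ and rationality of $H$ (Lemma \xref{lemma-IC-H}) are established \emph{before} anything is known about the equation of $(H,P)$, using the graph $\Delta(F,\Gamma)$ computed via the Stein factorization of $f|_F$ and Catanese's description of the double cover $F_Z\to Z$, with rationality coming from the fact that $f_H$ is a rational curve fibration; this rationality is then used as an input (through the non-rationality criterion of Lemma \xref{IC-last-lemma-1}) to constrain the equation of $H$. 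Proposing to ``read off'' rationality from a local equation you do not yet have runs this logic backwards.
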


If an extremal curve germ $(X,C\simeq \PP^1)$ contains a point of type 
\type{(IIB)}, then it cannot be flipping  \cite[Theorem 4.5]{Kollar-Mori-1992}.
Remaining cases of divisorial contractions and $\QQ$-conic bundles
are covered by the following theorem. 

\begin{thm}
\label{main-IIB}
{\bf Theorem.}
Let  $(X,C)$ is an extremal curve germ of type \type{(IIB)}
with irreducible $C$ and let $f:(X,C)\to (Z,o)$ be the corresponding contraction.
Let $P\in X$ be \textup(a unique\textup) singular point.
Then the general member $H\in |\OOO_X|_C$  is normal, smooth outside of $P$,
and has only rational singularities. Moreover, the following are the only possibilities
for the dual graph of $(H,C)$.
\par\medskip\noindent
{\bf $(X,P)$ is a simple \type{cAx/4} point (see \textup{\ref{IIB-local-definition}}):}
\begin{sthm}\label{IIB-theorem-A2case-simple}
$f$ is a divisorial contraction, $T:=f(H)$ is Du Val of type \type{A_2}\textup,
\[
\begin{array}{c@{}c@{}c@{}c@{}c@{}c@{}c@{}c@{}c}
\overset{3}\circ&\lin&\overset{4}{\circ}&\lin&\overset{}{\circ}&\lin&\circ&\lin&\circ
\\[-3pt]
&&|&&|
\\[-3pt]
&&\underset{3}{\circ}&&\circ&\lin&\bullet
\end{array}
\]
\end{sthm}

\begin{sthm}\label{IIB-theorem-smooth-case-simple}
$f$ is divisorial  contraction, $T:=f(H)$ is smooth\textup,
\[
\begin{array}{c@{}c@{}c@{}c@{}c@{}c@{}c@{}c@{}c@{}c@{}c@{}c@{}c@{}}
\overset{3}{\circ}&\lin&\circ&\lin&\circ&\lin&\circ&\lin&\circ&\lin&\circ&\lin&\bullet 
\\[-3pt]
&&&&&&|
\\[-3pt]
&&&&\underset3\circ&\lin&\underset4\circ
\end{array}
\]
\end{sthm}

\par\medskip\noindent
{\bf $(X,P)$ is a double \type{cAx/4} point:}

\begin{sthm}\label{IIB-theorem-D4case-double}
$f$ is divisorial  contraction, $T:=f(H)$ is Du Val of type \type{D_4}\textup,
\[
\begin{array}{c@{}c@{}c@{}c@{}c@{}c@{}c@{}c@{} c@{} c@{}c}
&&&&&&&&\circ
\\[-5pt]
&&&&&&&&|
\\[-1pt]
\circ&\lin&\circ&\lin&\circ&\lin&\overset4\circ&\lin&\overset3\circ&\lin&\circ
\\[-5pt]
&&&&|&&&&|
\\[-5pt]
&&\bullet&\lin&\circ&&&&\underset{}{\circ}
\end{array}
\]
\end{sthm}

\begin{sthm}\label{IIB-theorem-conic-bundle-case-double}
$f$ is a $\QQ$-conic bundle\textup,
\[
\begin{array}{c@{}c@{}c@{}c@{}c@{}c@{}c@{}c@{}c@{}c@{}c@{}c@{}c@{}c@{}c@{}c@{}c@{}c@{}c@{}}
\circ&\lin&\overset{3}{\circ}&\lin&\overset{}\circ &\lin&\overset{}\circ&\lin&\overset{}{\circ}
&\lin&\overset{}\circ&\lin&\overset{}\circ&\lin&\overset{}{\circ}&\lin&\bullet
\\[-6pt]
&&|&&&&&&|
\\[-6pt]
&&\circ&&&&&&\underset{4}{\circ}
\end{array}
\]
\end{sthm}
\end{thm}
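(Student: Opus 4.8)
The plan is to separate a local computation at $P$ from the global analysis of the contraction $f$, since the dual graph of $(H,C)$ is, up to a single vertex, just the minimal resolution graph of the surface germ $(H,P)$. First I would fix semi-invariant coordinates for the \type{cAx/4} point $(X,P)$ and write a normal form for the germ $(P\in C\subset X)$ of type \type{IIB}, with $C$ supported on a coordinate axis; the simple versus double distinction of \ref{IIB-local-definition} enters as a discrete choice in this normal form. Cutting by a general semi-invariant function vanishing on $C$ produces the general member $H\in|\OOO_X|_C$, and the bulk of the work is to prove that $(H,P)$ is a normal surface singularity, to compute its minimal resolution, and to read off the chains carrying the $-3$ and $-4$ vertices. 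Normality and smoothness of $H$ away from $P$, and rationality of the singularity, would follow from Bertini together with the general-member arguments of \cite{Kollar-Mori-1992}; the point requiring care is that the analytic type of $(H,P)$ is \emph{not} determined by the local type of $P$ alone but also by the free parameters in the normal form, which is precisely why each local type will split into two graphs below.

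Next I would bring in the extremal hypothesis. Imposing that $C\cong\PP^1$, that $-K_X$ is $f$-ample, and computing $-K_X\cdot C$ together with the local intersection data forces the remaining parameters into finitely many cases and fixes the contraction type. Using adjunction $K_H=K_X|_H$ (valid since $H$ is Cartier) I would show that on the minimal resolution $\tilde H\to H$ the proper transform $\tilde C$ is a complete $(-1)$-curve --- this is the $\bullet$ in each diagram --- and locate the vertex of the resolution graph to which it attaches. At this stage $\Delta(H,C)$ is assembled as the resolution graph of $(H,P)$ with $\tilde C$ glued on. The simple \type{cAx/4} case then splits according to whether $f|_H\colon H\to T:=f(H)$ contracts the relevant configuration to an \type{A_2} point (\ref{IIB-theorem-A2case-simple}) or to a smooth point (\ref{IIB-theorem-smooth-case-simple}); the double case yields either a divisorial contraction with $T$ of type \type{D_4} (\ref{IIB-theorem-D4case-double}) or a $\QQ$-conic bundle, where $T$ is a curve germ and $C$ is a fibre (\ref{IIB-theorem-conic-bundle-case-double}). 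In each case the type of $T$ is recovered by contracting $\tilde C$ together with the fibral components of $E\cap H$ and checking that the result is negative definite of the asserted Du Val type.

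The step I expect to be the genuine obstacle is \emph{completeness}: proving that the normal-form parameters, subject to the extremal condition, realize exactly these four configurations and nothing else. This demands three coordinated arguments --- that the contractible part of each graph is the resolution of a rational (indeed Du Val or smooth) singularity, that the induced contraction has the correct relative Picard number, and, in the last case, that the $\QQ$-conic bundle structure is consistent with the discriminant and local fundamental-group computations of \cite{Mori-Prokhorov-2008}. The individual intersection-number and resolution computations are routine once the normal form at $P$ is in hand, but organizing the parameter exclusion so that no further graph survives, and exhibiting an actual germ realizing each of the four, is where the real effort lies.
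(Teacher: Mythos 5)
Your outline shares the paper's skeleton (normal form at the \type{cAx/4} point, equations for $H$ through $C$, resolution of $(H,P)$, case split simple/double, then the contraction data), and your remark that $(H,P)$ depends on the free parameters of the normal form and not just on the type of $P$ is exactly the right starting point. But your completeness mechanism has a genuine gap. You propose to eliminate all other graphs by purely numerical and contraction-theoretic tests: rationality/Du Val-ness of the contracted part, relative Picard number, and discriminant/fundamental-group constraints from \cite{Mori-Prokhorov-2008}. This cannot succeed, because there are configurations that pass every one of these tests and that genuinely occur --- just not for \emph{general} $H$. Concretely, in the double \type{cAx/4} case the paper's codiscrepancy analysis leaves, besides the \type{D_4} graph of \ref{IIB-theorem-D4case-double}, two contractible configurations \ref{IIB-theorem-D5case-double} and \ref{IIB-theorem-E6case-double} in which $T$ is Du Val of type \type{D_5} or \type{E_6}, and in the $\QQ$-conic bundle case a whole family of contractible graphs with an extra chain of length $n\ge 0$; all of these are dual graphs of actual members of $|\OOO_X|_C$ on actual extremal curve germs, so no intersection-theoretic argument about the graph alone can rule them out. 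The paper eliminates them by an argument your plan has no counterpart for (\ref{general-extension}): using $R^1f_*\OOO_X=0$ it constructs a second member $H'\in|\OOO_X|_C$ lifting a suitable numerically trivial divisor, forms the pencil spanned by $H$ and $H'$, and shows via Bertini and the one-dimensional singular locus $\Lambda$ of the $\frac14(3,2,1,1)$-weighted blowup $\tilde X$ (computing $(\tilde H_\epsilon\cdot \Lambda)_{\tilde X}=4$) that a general member of the pencil has strictly milder singularities, i.e.\ realizes \ref{IIB-theorem-D4case-double} resp.\ \ref{IIB-theorem-conic-bundle-case-double}. A genericity comparison \emph{inside} $|\OOO_X|_C$ of this kind --- also used, via the deformation result \cite[Prop.~11.4]{Kollar-Mori-1992}, to prove $q(0,y_4)\neq 0$ (Corollary \ref{lemma-IIB-q4ne0}) --- is indispensable, and none of your three ``coordinated arguments'' supplies it.

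Two secondary points. First, normality, rationality, and smoothness of $H$ outside $P$ are not just ``Bertini plus general-member arguments'': Bertini only gives smoothness off $C$, and the paper's Lemma \ref{lemma-IIB-H} gets smoothness along $C\setminus\{P\}$ from $\OOO_C(-H)=\OOO\subset\gr^1_C\OOO$ together with the graph $\Delta(F,\Gamma)$ of the general elephant $F\in|-K_X|$ (type \type{E_6}), and in the $\QQ$-conic bundle case this requires the Stein factorization of $f|_F$ and Catanese's description of the double cover $(F_Z,o_Z)\to(Z,o)$; rationality then comes from $f_H\colon H\to T$ being a birational contraction to a Du Val point or a rational curve fibration. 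Second, the paper's case division is driven by the local equations ($c\neq 0$ versus $c=0$, and $l(0,y_4)\neq 0$ versus $l(0,y_4)=0$), with the type of $T$ and of the contraction deduced afterwards by the weighted-blowup and codiscrepancy computations; your plan runs this implication in the opposite direction, which is workable only once the genericity issue above is resolved.
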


\section{Case \type{(IC)}}
In this section we prove Theorem \ref{IC-main}.
The techniques of \cite[ch. 8]{Kollar-Mori-1992} will be used freely, sometimes 
without additional explanations.

\begin{de}{\bf Setup.}
\label{IC-IC-notation-1}
Let $(X,P)$ be the germ of a three-dimensional terminal singularity and let $C\subset (X,C)$ be 
a smooth curve. 
Recall that the triple $(X,C,P)$ is said to be of type \type{(IC)} if 
there are analytic isomorphisms 
\begin{equation*} 
(X,P)\simeq\CC^3_{y_1,y_2,y_4}/\muu_m(2,m-2,1), \quad 
 C^\sharp\simeq\{y_1^{m-2}-y_2^2=y_4=0\}, 
\end{equation*}
where $m$ is odd and $m \ge 5$. 

\begin{sde}
Let $(X,C)$ be a $\QQ$-conic
bundle germ and let $f\colon (X, C) \rightarrow (Z, o)$ be 
the corresponding contraction. 
In this section we assume that $C$ is irreducible and has a point 
$P$ of type \type{(IC)}.
Recall that $(X,C)$ is locally primitive at $P$ \cite[4.2]{Mori-1988}.
Moreover, $P$ is the only singular point on $C$
\cite[Theorem 8.6, Lemma 7.1.2]{Mori-Prokhorov-2008}.
Thus the group $\Cl (Z,o)$ has no torsion.
Therefore, the base point $(Z,o)$ is smooth. 
\end{sde}
\end{de}

\begin{de}
\label{IC-(8.2)}
We have an $\ell$-splitting 
\begin{equation} \label{IC-eq--l-splitting-0}
\gr^1_C\OOO=(4P{^\sharp}) \toplus (-1 + (m-1)P{^\sharp}) 
\end{equation} 
by \cite[\S 3]{Mori-Prokhorov-2008III}, 
\cite[2.10.2]{Kollar-Mori-1992}, 
and hence the unique $(4P{^\sharp})$ in $\gr^1_C\OOO$. Since $y_4$ and 
$y^{m-2}_1-y^2_2$ form an $\ell$-free $\ell$-basis of 
$\gr^1_C\OOO$ at $P$, $(4P{^\sharp})$ has an $\ell$-free $\ell$-basis of the form
\begin{equation}
\label{IC-IC-notation-equation-u}
u=\lambda_1y^{(m-5)/2}_1y_4 + \mu_1(y^{m-2}_1-y^2_2)
\end{equation}
for some $\lambda_1$ and $\mu_1 \in \OOO_{C,P}$. It is easy to see that whether 
$\lambda_1(P) \neq 0$ does 
not depend on the choice of coordinates.

\begin{sde}{\bf Remark.}
We have 
\[
\OOO_C=\OOO_C(-H) \hookrightarrow \gr_C^1\OOO=\OOO\oplus \OOO(-1) 
\]
If $m\ge 7$, this implies that the term $y_1^2(y_1^{m-2}-y_2^2)$
appears in the equation of $H$.
If $m=5$, then either $y_1^2(y_1^3-y_2^2)$ or $y_1^2y_4$
appears in the equation of $H$.
\end{sde}
\end{de}

\begin{de}
\label{IC-ge}
According to \cite[\S 3]{Mori-Prokhorov-2008III} (cf. \cite[2.10]{Kollar-Mori-1992})
a general member $F\in |-K_X|$ contains $C$, has only Du Val singularities, and $\Delta(F,C)$ 
is the following graph of $(-2)$-curves
\begin{equation}
\label{IC-eq-Dn-diagram}
\begin{array}{c@\,c@\,c@\,c@\,c}
&&\bullet
\\[-4pt]
&&|
\\[-4pt]
\underbrace{\circ\lin\cdots\lin\circ}_{m-3}&\lin&\circ&\lin&\circ
\end{array} 
\end{equation}
where $\bullet$ corresponds to $C$.
We can choose coordinates $y_1$, $y_2$, $y_4$ in a neighborhood of $P$
so that $F=\{y_4=0\}/\muu_m$. 
In particular, the $\ell$-splitting \eqref{IC-eq--l-splitting-0} has the form
\begin{equation}
\label{IC-eq--l-splitting} 
\gr^1_C\OOO=(4P{^\sharp}) \toplus \OOO_C(-F). 
\end{equation} 
\end{de}

\begin{thm} {\bf Lemma.}
\label{lemma-IC-H}
A general member
$H\in |\OOO_X|_C$ is normal, has only rational singularities, and smooth outside of $P$.
\end{thm}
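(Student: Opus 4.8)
The plan is to establish the three assertions in the order: smoothness away from $P$, normality, and rationality of the singularity at $P$; normality will then be a formal consequence of the first assertion together with the Cohen--Macaulay property of $H$. First I would record, as in \ref{IC-(8.2)}, that since $f_*\OOO_X=\OOO_Z$ and $(Z,o)$ is smooth, the system $|\OOO_X|_C$ is the pullback $f^*|\mathfrak{m}_o|$ of the curves through $o$; thus a general $H$ equals $f^*\ell$ for a general smooth $\ell\ni o$. In particular $H$ is a \emph{Cartier} divisor on $X$, hence Cohen--Macaulay (a hypersurface section of the Cohen--Macaulay germ $X$), which gives condition $S_2$.

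The base locus of the pencil $\{f^*\ell\}$ is $f^{-1}(o)=C$, so by Bertini a general $H$ is smooth on $X\setminus(C\cup\{P\})$. Along $C\setminus\{P\}$, where $X$ is smooth, I would use the inclusion $\OOO_C=\OOO_C(-H)\hookrightarrow\gr^1_C\OOO$ of \ref{IC-(8.2)}: it shows that $H$ contains $C$ with multiplicity one, and that for general $H$ the only base point of $|\OOO_X|_C$ on $C$ is $P$; hence a general $H$ is smooth along $C\setminus\{P\}$ as well. This establishes that $H$ is regular in codimension one, and together with $S_2$ it yields normality by Serre's criterion.

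For rationality at $P$ I would argue cohomologically. A $\QQ$-conic bundle germ satisfies $R^1f_*\OOO_X=0$, and since $\OOO_X(-H)=f^*\OOO_Z(-\ell)$, the projection formula applied to $0\to\OOO_X(-H)\to\OOO_X\to\OOO_H\to0$ gives $R^1f_*\OOO_H=0$, i.e. $R^1g_*\OOO_H=0$ for $g:=f|_H\colon H\to\ell$. Now take the minimal resolution $\pi\colon\widetilde H\to H$ and set $h:=g\circ\pi$. The fibres of $h$ are smooth conics over general points of $\ell$ and a tree of rational curves over $o$, so $R^1h_*\OOO_{\widetilde H}=0$; comparing this with $R^1g_*\OOO_H=0$ through the low-degree exact sequence of the Leray spectral sequence of $\pi$ and $g$ forces $g_*\bigl(R^1\pi_*\OOO_{\widetilde H}\bigr)=0$, and since $R^1\pi_*\OOO_{\widetilde H}$ is supported at $P$ this gives $R^1\pi_*\OOO_{\widetilde H}=0$, i.e. $(H,P)$ is rational. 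For the explicit description feeding into \ref{IC-(8.3.2)} one instead reads the local equation of $H$ from the $\ell$-basis \eqref{IC-IC-notation-equation-u} and the Remark in \ref{IC-(8.2)}: in the chart $\CC^3_{y_1,y_2,y_4}/\muu_m(2,m-2,1)$ the divisor $H$ is cut out by a function whose leading part is $y_1^2(y_1^{m-2}-y_2^2)$ for $m\ge 7$, and either $y_1^2(y_1^3-y_2^2)$ or $y_1^2y_4$ for $m=5$.

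The main obstacle is the analysis at $P$. The cohomological route needs some care because $g$ is a conic bundle with a special central fibre, so the base-change step and the identification of $R^1\pi_*$ as the obstruction to rationality must be handled with the singular fibre $C$ in view; the explicit route requires extracting the precise leading term from the splitting \eqref{IC-eq--l-splitting} and then resolving, separating the cases $m=5$ and $m\ge7$ and, for $m=5$, the two leading terms governed by whether $\lambda_1(P)\ne0$. By contrast, the smoothness and normality statements away from $P$ are soft consequences of Bertini and Serre's criterion, once the multiplicity-one behaviour along $C$ has been read off from $\gr^1_C\OOO$.
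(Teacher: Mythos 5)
Your overall plan (Bertini plus the $\gr^1_C\OOO$-splitting for smoothness away from $P$, Serre's criterion for normality, and ``rational curve fibration $\Rightarrow$ rational singularities'' for the point $P$) is sound, but it is a genuinely different route from the paper's. The paper never touches $\gr^1_C\OOO$ in this proof: it intersects $H$ with the general elephant $F\in|-K_X|$ of \ref{IC-ge}, passes to the Stein factorization $(F,C)\to(F_Z,o_Z)\to(Z,o)$, invokes Catanese \cite{Catanese-1987} to identify the double cover $(F_Z,o_Z)\to(Z,o)$ with the projection of $\{z^2+xy^2+x^{m-1}=0\}$ to the $(x,y)$-plane, so that $\Gamma_Z=f_1(H\cap F)$ is a general line, and then reads off from the multiplicities in the minimal resolution graph of $(F,C)$ that $\Gamma=H\cap F$ is reduced, which forces $H$ to be smooth outside $P$. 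Your Bertini-plus-splitting argument is exactly the one the paper deploys later in \xref{IC-(8.10)}, and it works here too, but it leans on the Remark in \xref{IC-(8.2)} (that a general $H$ induces $\OOO_C=\OOO_C(-H)\hookrightarrow\gr^1_C\OOO$, hence lands isomorphically on the unique $\OOO$-summand); that Remark needs the surjectivity $H^0(I_C)\twoheadrightarrow H^0(\gr^1_C\OOO)\simeq\CC$, i.e.\ a vanishing input, so you must supply it independently to avoid assuming what the Lemma proves. The paper's detour through $F$ sidesteps this and, as a bonus, sets up the coordinates on $(F_Z,o_Z)$ reused in Proposition \xref{IC-(8.8)}; your route is more self-contained but buys less for the rest of the section.

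The one concrete gap is in your rationality step. You justify $R^1h_*\OOO_{\widetilde H}=0$ by asserting that the fibre of $h$ over $o$ is ``a tree of rational curves''; but that fibre contains the exceptional locus of the minimal resolution $\pi$, so this assertion is essentially the conclusion you are after — and even granted for the reduced fibre it would not give the vanishing, since rationality is a condition on the fundamental cycle, not on the reduced exceptional configuration. The fix, which makes your Leray computation go through, avoids the central fibre entirely: run a relative MMP on the smooth surface $\widetilde H$ over $T$, contracting $(-1)$-curves in fibres until one reaches a relatively minimal model; because the general fibre is $\PP^1$ (and genus-zero fibrations admit no multiple fibres), that model is a $\PP^1$-bundle over the germ $T$, and since each blow-down preserves $R^1$ of the structure sheaf, $R^1h_*\OOO_{\widetilde H}=0$ follows. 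With this in hand your five-term sequence gives $g_*\bigl(R^1\pi_*\OOO_{\widetilde H}\bigr)=0$, hence $R^1\pi_*\OOO_{\widetilde H}=0$ (it is a skyscraper at $P$); note this already yields $R^1g_*\OOO_H=0$ as well, so your projection-formula preamble is superfluous. This is precisely the one-line standard fact the paper invokes: ``$f_H:H\to T$ is a rational curve fibration, hence $H$ has only rational singularities.''
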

\begin{proof}
Similar to \ref{case-IIB-H-conic-bundle}.
Let $T:=f(H)$ and let $\Gamma:=H\cap F$. As in \ref{IIB-Stein-factorization}
consider the Stein factorization 
\begin{equation}
\label{IC=equation-Stein-factorization}
f_F: (F,C)\stackrel{f_1}\longrightarrow (F_Z, o_Z) \stackrel{f_2}\longrightarrow (Z,o).
\end{equation}
Put $\Gamma_Z:=f_1(\Gamma)$. 
 We may assume that, in some coordinate system, 
the germ $(F_Z, o_Z)$ is given by
$z^2+xy^2+x^{m-1}=0$. Then by \cite{Catanese-1987}
up to coordinate change the double cover $(F_Z, o_Z) \longrightarrow (Z,o)$
is just the projection to the $(x,y)$-plane.
Hence we may assume that $\Gamma_Z$ is given by $x=y$.
By \ref{IC-ge} we see that the graph $\Delta(F,\Gamma)$ has the form
\[
\begin{array}{c@\,c@\,c@\,c@\,c@\,c@\,c}
&\overset1\diamond&&&\overset1\bullet
\\[-4pt]
&|&&&|
\\[-4pt]
\underset1\circ\lin&\underset2\circ&\lin\cdots\lin\underset2\circ&\lin&\underset2\circ&\lin&\underset1\circ 
\end{array} 
\]
Therefore, $\Gamma$ is reduced and so $H$ is smooth outside of $P$.
The restriction $f_H: H\to T$ is a rational curve fibration.
Hence $H$ has only rational singularities.
\end{proof}

\begin{de}
\label{IC-(8.5)} 
Let $J$ be the $C$-laminal ideal such that 
$I_C \supset J \supset \F^2_C\OOO$ and 
$J/\F^2_C\OOO=(4P{^\sharp})$ in \eqref{IC-eq--l-splitting}. Since $J$ is locally a 
nested c.i. on $C\setminus\{P\}$ and $(y_4,u)$ is a (1,2)-monomializing 
$\ell$-basis of $I_C \supset J$ at $P$ with $u$ as in \eqref{IC-IC-notation-equation-u}. 
We have an $\ell$-exact sequence 
\begin{equation}
\label{IC-eq-(8.5.1)} 
0\to \OOO_C(-2F) \longrightarrow \gr^0_C J \longrightarrow (4P{^\sharp}) \to 0
\end{equation} 
and an $\ell$-isomorphism $\OOO_C(-2F) \simeq (-1+(m-2)P{^\sharp})$. Thus we have $\gr^0_C J \simeq \OOO 
\oplus \OOO(-1)$ as $\OOO_C$-modules. The unique $\OOO$ 
in $\gr^0_C J$ is generated near $P$ by 
\begin{equation}
\label{IC-eq-(8.5.2)} 
y^2_1u + \alpha y_2y^2_4 \mod \F^3(\OOO,J) 
\end{equation} 
for some $\alpha \in \OOO_{C,P}$. 
\end{de}

Proofs of the following two lemmas 
given in \cite{Kollar-Mori-1992} work in our situation 
without any changes.

\begin{thm}{\bf Lemma (\cite[Lemma 8.5.3]{Kollar-Mori-1992}).}
\label{IC-(8.5.3)}
\begin{equation*} 
\F^3(\OOO,J){^\sharp} \subset 
\left((y^{m-2}_1-y^2_2)^2,\, (y^{m-2}_1-y^2_2)y_4,\, \lambda_1y^{(m-5)/2}_1y^2_4,\, y^3_4\right). 
\end{equation*} 
\end{thm}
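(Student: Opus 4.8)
The plan is to reduce the asserted inclusion to a routine computation with the monomializing basis $(y_4,u)$, exactly as in \cite[Lemma 8.5.3]{Kollar-Mori-1992}. The first and only substantial point is to describe $\F^3(\OOO,J)^\sharp$ explicitly. Since by \ref{IC-(8.5)} the pair $(y_4,u)$ is a $(1,2)$-monomializing $\ell$-basis of $I_C\supset J$ at $P$, assigning the weights $\wt(y_4)=1$ and $\wt(u)=2$ makes the graded pieces of the laminal filtration $\F^\bullet(\OOO,J)$ freely generated over $\OOO_C$ by the monomials $y_4^au^b$. Hence, after passing to the cover, $\F^3(\OOO,J)^\sharp$ is generated by those $y_4^au^b$ with $a+2b\ge 3$, and its minimal generators are $y_4^3$, $y_4u$ and $u^2$. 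Thus it suffices to check that each of these three elements lies in the ideal on the right-hand side.

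For this I would substitute $u=\lambda_1y_1^{(m-5)/2}y_4+\mu_1(y_1^{m-2}-y_2^2)$ from \eqref{IC-IC-notation-equation-u} and expand. Writing $w:=y_1^{m-2}-y_2^2$ for brevity, one gets
\[
y_4u=\lambda_1y_1^{(m-5)/2}y_4^2+\mu_1\,w\,y_4,\qquad
u^2=\lambda_1^2y_1^{m-5}y_4^2+2\lambda_1\mu_1y_1^{(m-5)/2}\,w\,y_4+\mu_1^2w^2.
\]
Now $y_4^3$ is already one of the four listed generators; both terms of $y_4u$ are multiples of $\lambda_1y_1^{(m-5)/2}y_4^2$ and of $wy_4$ respectively; and in $u^2$ the first term equals $(\lambda_1y_1^{(m-5)/2})\cdot(\lambda_1y_1^{(m-5)/2}y_4^2)$, the middle term is a multiple of $wy_4$, and the last term is a multiple of $w^2$. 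Therefore $y_4^3,\,y_4u,\,u^2\in\bigl(w^2,\,wy_4,\,\lambda_1y_1^{(m-5)/2}y_4^2,\,y_4^3\bigr)$, which gives the claimed inclusion.

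The main obstacle is entirely contained in the first step, namely the identification $\F^3(\OOO,J)^\sharp=(y_4^3,y_4u,u^2)$; this rests on the formalism of laminal ideals and monomializing $\ell$-bases developed in \cite[ch.~8]{Kollar-Mori-1992}, and one must make sure that the freeness of $\gr^\bullet_\F\OOO$ really yields the filtration on the nose rather than only modulo higher terms. Once that is granted the remaining verification is purely formal. Since the local model $(X,C,P)$ of type \type{(IC)} and the generator $u$ are literally those of \cite{Kollar-Mori-1992}, their argument transfers without change, which is why no separate computation is needed here.
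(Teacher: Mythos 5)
Your proof is correct and follows essentially the same route as the paper's: the paper does not reprove this lemma but states that the proof of \cite[Lemma 8.5.3]{Kollar-Mori-1992} works without changes, and that proof is precisely your reduction — use the $(1,2)$-monomializing $\ell$-basis $(y_4,u)$ to identify $\F^3(\OOO,J)^\sharp$ with the monomial ideal $(y_4^3,\,y_4u,\,u^2)$, then expand $y_4u$ and $u^2$ via \eqref{IC-IC-notation-equation-u}. The step you flag as the main obstacle is in fact immediate: by the definition of a $(1,w)$-monomializing $\ell$-basis in \cite{Mori-1988} and \cite[2.8]{Kollar-Mori-1992}, the filtration $\F^n(\OOO,J)^\sharp$ \emph{is} the monomial ideal $\sum_{a+2b\ge n} y_4^a u^b\,\OOO_{X^\sharp}$ on the nose (the monomial ideal is already saturated, its quotient being free over $\OOO_{C^\sharp}$), so no further verification is needed there.
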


\begin{thm}{\bf Lemma (\cite[Lemma 8.6]{Kollar-Mori-1992}).}
\label{IC-lemma-l-split-(8.6)} 
The $\ell$-exact sequence \eqref{IC-eq-(8.5.1)} is 
$\ell$-split if and only if $\alpha (P)=0$. 
\end{thm}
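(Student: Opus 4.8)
The plan is to reconstruct the argument of \cite[Lemma 8.6]{Kollar-Mori-1992}, whose only inputs are the explicit local $\ell$-basis at $P$ and the containment of Lemma \ref{IC-(8.5.3)}, both of which hold here unchanged. First I would localize the problem at $P$. By \ref{IC-(8.5)} the ideal $J$ is a nested complete intersection on $C\setminus\{P\}$, so there both terms of \eqref{IC-eq-(8.5.1)} are invertible $\ell$-sheaves and the sequence is canonically $\ell$-split; consequently whether it is $\ell$-split globally is governed entirely by the local $\ell$-extension datum at $P$, i.e.\ by whether the $\ell$-free $\ell$-basis $u$ of $(4P^\sharp)$ from \eqref{IC-IC-notation-equation-u} admits an $\ell$-free lift to $\gr^0_C J$ under the quotient map of \eqref{IC-eq-(8.5.1)}.

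Next I would read off the candidate lift from the explicit generator \eqref{IC-eq-(8.5.2)} of the unique $\OOO$-summand of $\gr^0_C J$, namely $y_1^2 u+\alpha y_2 y_4^2 \bmod \F^3(\OOO,J)$. A weight count shows that $y_1^2u$ and $y_2y_4^2$ are the only $\muu_m$-invariant monomials of the relevant order, the first lying over the generator of the quotient $(4P^\sharp)$ and the second coupling it to the sub $\OOO_C(-2F)$. An $\ell$-section of \eqref{IC-eq-(8.5.1)} therefore exists precisely when this off-diagonal coupling term $\alpha y_2 y_4^2$ can be removed by an admissible $\ell$-change of the basis, that is, by multiplying the candidate lift by a unit of $\OOO_{C,P}$, adding a section of $\OOO_C(-2F)$, and adding an element of $\F^3(\OOO,J)$.

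The step I expect to be the main obstacle is the bookkeeping that identifies this obstruction with the single number $\alpha(P)$. Here Lemma \ref{IC-(8.5.3)} is decisive: its list of generators of $\F^3(\OOO,J)^\sharp$ shows that $y_2y_4^2$ itself is not contained in $\F^3(\OOO,J)$ (no weight-$0$ combination of $(y_1^{m-2}-y_2^2)^2$, $(y_1^{m-2}-y_2^2)y_4$, $\lambda_1 y_1^{(m-5)/2}y_4^2$ and $y_4^3$ produces it), so the coupling genuinely survives the truncation modulo $\F^3$ and its leading coefficient at $P$ is a well-defined invariant. Since the $\ell$-structure only detects this leading behaviour at $P$, an admissible correction can absorb $\alpha y_2y_4^2$ exactly when $\alpha$ vanishes to positive order along $C$ at $P$, i.e.\ when $\alpha(P)=0$: in that case $\alpha\in\mathfrak m_{C,P}$, the coupling has higher $\ell$-order and is gauged away, yielding an $\ell$-free lift and hence an $\ell$-splitting; if $\alpha(P)\neq 0$ the coupling is a genuine weight-$0$ contribution that, by the same monomial list together with the coordinate-independence of the condition $\lambda_1(P)\neq0$ noted after \eqref{IC-IC-notation-equation-u}, no admissible correction can cancel, so the lift fails to be $\ell$-free and \eqref{IC-eq-(8.5.1)} does not $\ell$-split. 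Verifying that the admissible corrections are indeed this restricted — which makes the condition necessary and not merely sufficient — is the one point that demands care, and it is settled entirely by Lemma \ref{IC-(8.5.3)}.
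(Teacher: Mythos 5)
The paper does not reprove this lemma at all: it states that the proof of \cite[Lemma 8.6]{Kollar-Mori-1992} applies without change, so your reconstruction must be measured against that argument, whose skeleton (localize at $P$, work with the generator \eqref{IC-eq-(8.5.2)}, control things via Lemma \ref{IC-(8.5.3)}) you do reproduce. The decisive step, however, is stated in a way that would make the lemma false. Among your ``admissible $\ell$-changes of the basis'' you allow \emph{adding a section of $\OOO_C(-2F)$}. But the invariant (i.e.\ $\OOO_C$-module) local sections of $\OOO_C(-2F)\simeq(-1+(m-2)P^\sharp)$ at $P$ are exactly $\OOO_{C,P}\cdot y_2y_4^2$: the $\ell$-basis $y_4^2$ has weight $2$, and $y_2$ is the semi-invariant of lowest order making $y_2y_4^2$ invariant. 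With that move available, the coupling term $\alpha y_2y_4^2$ in \eqref{IC-eq-(8.5.2)} could be cancelled for \emph{every} $\alpha$, and \eqref{IC-eq-(8.5.1)} would always be $\ell$-split. What an $\ell$-splitting $(4P^\sharp)\to\gr^0_CJ$ actually permits is correcting the $\ell$-free $\ell$-basis $u$ (of weight $\equiv m-4$) only by semi-invariant sections of $\OOO_C(-2F)^\sharp$ of the \emph{same} weight, i.e.\ by multiples of $y_1^{m-3}y_4^2$; on the invariant generator $y_1^2u+\alpha y_2y_4^2$ this allows corrections by multiples of $y_1^2\cdot y_1^{m-3}y_4^2=y_1^{m-1}y_4^2$, never by $y_2y_4^2$ itself. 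This weight restriction comes from the $\ell$-structure alone, not from Lemma \ref{IC-(8.5.3)}, so your closing claim that necessity is ``settled entirely by Lemma \ref{IC-(8.5.3)}'' misplaces where the content lies.

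With the correct set of admissible corrections, both directions still need work that your sketch omits. For ``$\alpha(P)=0\Rightarrow$ split'', the phrase ``the coupling has higher $\ell$-order and is gauged away'' is an assertion, not an argument: the actual point is that $\OOO_{C,P}=\CC\{(y_1y_2)|_C\}$, so $\alpha(P)=0$ gives $\alpha=y_1y_2\beta$, whence, modulo $(y_1^{m-2}-y_2^2)y_4\in\F^3(\OOO,J)^\sharp$ (so Lemma \ref{IC-(8.5.3)} enters here too, in the direction opposite to the one you use it), $\alpha y_2y_4^2=\beta y_1y_2^2y_4^2\equiv\beta y_1^{m-1}y_4^2=y_1^2\cdot\beta y_1^{m-3}y_4^2$, which is an admissible correction; this is exactly the manipulation the paper itself carries out for $m=5$ in the proof of Lemma \ref{IC-last-lemma-2}. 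For ``$\alpha(P)\neq0\Rightarrow$ not split'' you must show $y_2y_4^2\notin(y_1^{m-1}y_4^2)+\F^3(\OOO,J)^\sharp$, not merely $y_2y_4^2\notin\F^3(\OOO,J)^\sharp$; moreover your parenthetical monomial check breaks down when $m=5$ and $\lambda_1(P)\neq0$, since then $y_2\cdot\lambda_1y_4^2$ is a weight-$0$ multiple of the third generator listed in Lemma \ref{IC-(8.5.3)} (the lemma being only a containment, this does not immediately refute the claim, but it does invalidate your stated reason). So the proposal has the right architecture, but the pivotal bookkeeping is wrong in one place and missing in two others.
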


\begin{thm} {\bf Proposition.}
\label{IC-Proposition-(8.7)} 
If $m \geq 7$, then $\alpha (P) \neq 0$. 
\end{thm}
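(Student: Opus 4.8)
The plan is to argue by contradiction and reduce to a splitting question that the preceding lemma already resolves for us. By Lemma~\ref{IC-lemma-l-split-(8.6)}, $\alpha(P)=0$ holds exactly when the $\ell$-exact sequence \eqref{IC-eq-(8.5.1)} is $\ell$-split, so it is enough to show that, for $m\ge 7$, this sequence cannot $\ell$-split; equivalently, the $\OOO$-summand of $\gr^0_C J$ cannot be generated near $P$ by $y^2_1u$ alone, without the $y_2y^2_4$-correction of \eqref{IC-eq-(8.5.2)}. So I assume $\alpha(P)=0$ and seek a contradiction.

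First I would make the passage to the general hyperplane section explicit. The generator \eqref{IC-eq-(8.5.2)} of the $\OOO$-summand of $\gr^0_C J$ is the leading part along $C$ of the equation of a general $H\in|\OOO_X|_C$; by the Remark in \ref{IC-(8.2)}, for $m\ge 7$ the leading term of this equation is $y^2_1(y^{m-2}_1-y^2_2)$, the $\mu_1$-part of $y^2_1u$, and, in contrast to the case $m=5$, no low-order term in $y_4$ is available to soften the singularity. Thus under the hypothesis $\alpha(P)=0$ the germ $(H,P)$ is the $\muu_m$-quotient of the surface $\{y^2_1u+(\text{terms in }\F^3(\OOO,J){^\sharp})=0\}$, whose admissible higher-order terms are pinned down by Lemma~\ref{IC-(8.5.3)}. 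I would then put this equation into a normal form, treating separately the two coordinate-independent cases $\lambda_1(P)\ne 0$ and $\lambda_1(P)=0$ of \eqref{IC-IC-notation-equation-u}.

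The decisive step is then to resolve this germ and to read off that, precisely when $m\ge 7$, the point $(H,P)$ produced with $\alpha(P)=0$ fails to be rational, contradicting Lemma~\ref{lemma-IC-H}. The mechanism I expect is that, for $m\ge 7$, the factor $y^{(m-5)/2}_1$ carried by $u$ genuinely raises the order of the $y_4$-term (it disappears only for $m=5$), so that without the $y_2y^2_4$-correction the weighted resolution acquires a central exceptional curve of positive genus; the case $m=5$ escapes exactly because there the alternative low-order term $y^2_1y_4$ keeps the singularity rational. This weighted $\muu_m$-equivariant resolution, together with the bookkeeping of the bi-filtration $\F^\bullet(\OOO,J)$, is the main obstacle. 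A cleaner route that stays inside the laminal formalism would be to compute $\alpha$ directly, imposing that $y^2_1u+\alpha y_2y^2_4$ generate the $\OOO$-summand of $\gr^0_C J$ modulo $\F^3(\OOO,J)$ and checking by the invariant-monomial bookkeeping that the $y_2y^2_4$-correction occurs with a unit coefficient once $m\ge 7$; this isolates the same computation as the crux.
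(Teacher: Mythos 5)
Your opening reduction is fine: by Lemma \ref{IC-lemma-l-split-(8.6)} it suffices to rule out $\ell$-splitting of \eqref{IC-eq-(8.5.1)}, and identifying the local equation of a general $H\in|\OOO_X|_C$ with the generator \eqref{IC-eq-(8.5.2)} modulo $\F^3(\OOO,J)$ is legitimate (it requires Proposition \ref{IC-(8.8)}\ref{IC-(8.8.4)}, which is proved independently of the present proposition, so there is no circularity). The genuine gap is the decisive step: the claim that $m\ge 7$ and $\alpha(P)=0$ force $(H,P)$ to be non-rational is never proved --- you explicitly defer the weighted $\muu_m$-equivariant resolution as ``the main obstacle'' --- and it is far from clear that it is even true. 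Lemma \ref{IC-(8.5.3)} only bounds $\F^3(\OOO,J)^\sharp$ inside an ideal, and that ideal contains low-order invariant terms which could a priori rescue rationality: for $m=7$, for instance, $y_2^4y_4$ (from $y_4(y_1^5-y_2^2)^2$), $y_1y_2^2y_4^2$ (from $y_1y_4\cdot(y_1^5-y_2^2)y_4$), and $y_1^2y_4^3$ (from $y_1^2\cdot y_4^3$). This danger is not hypothetical, and your own heuristic about $m=5$ is incorrect: in case \ref{IC-(8.9.4)} one has $\lambda_1(P)=\alpha(P)=0$, so \emph{neither} $y_2y_4^2$ \emph{nor} $y_1^2y_4$ appears, yet the germ exists and $(H,P)$ is rational --- rationality there is rescued precisely by $\F^3$-terms, namely $y_2^3y_4$ and $y_1y_4^3$ (Lemmas \ref{IC-last-lemma-1} and \ref{IC-last-lemma-2}). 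Nothing in your proposal rules out an analogous rescue for $m\ge 7$. Note also that in the neighboring case \ref{IC-(8.9.1)} ($m\ge 7$, $\alpha(P)\neq 0$, treated in \ref{IC-(8.10)}) the singularity $(H,P)$ \emph{is} rational and the contradiction is that the dual graph contracts to a point instead of fibering over a curve; so rationality, as opposed to the fibration structure of $f_H$, is not obviously the invariant that detects $\alpha(P)=0$.

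For comparison, the paper's proof never touches $H$ and is purely cohomological: assuming $\ell$-splitting, it takes the $C$-laminal ideal $\KKK$ with $J\supset\KKK\supset\F^1(\OOO,J)$ and $\KKK/\F^1(\OOO,J)=(4P^\sharp)$, which is $(1,3)$-monomializable; tensoring \eqref{IC-eq-(8.7.1)} and \eqref{IC-eq-(8.7.2)} with $\omega_X$ and using that $3\notin 2\ZZ_{+}+(m-2)\ZZ_{+}$ exactly when $m\ge 7$ (the single place where the hypothesis $m\ge 7$ enters, and where $m=5$ escapes), it deduces $H^1(\gr^3_C(\omega_X,\KKK))\neq 0$, hence $H^1(\omega_X/\F^4(\omega_X,\KKK))\neq 0$, which by \cite[4.4]{Mori-Prokhorov-2008} forces $-K_X\cdot C=5/m\ge -K_X\cdot f^{-1}(o)=2$, a contradiction. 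This comparison also shows why your fallback suggestion of computing $\alpha$ ``directly by invariant-monomial bookkeeping'' cannot work even in principle: the splitting of \eqref{IC-eq-(8.5.1)} is a global condition on the complete curve $C$, both values of $\alpha(P)$ occur for local germs, and any proof must exploit the contraction $f$ globally (the paper does so through the inequality above; your route would do so through Lemma \ref{lemma-IC-H}). To salvage your approach you would need to prove an analogue of Lemma \ref{IC-last-lemma-1} valid for all odd $m\ge 7$ and covering every term permitted by Lemma \ref{IC-(8.5.3)}; that unperformed computation, which may in fact fail, is the entire content of the proposition.
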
 

\begin{proof} Assume that $\alpha (P)=0$, that is, 
\eqref{IC-eq-(8.5.1)} is $\ell$-split. 
Then $\gr^0_C J$ contains a unique $(4P{^\sharp})$. 
Let $\KKK$ be the $C$-laminal 
ideal such that $J \supset \KKK \supset \F^1(\OOO,J)$ and $\KKK/\F^1(\OOO,J)=(4P{^\sharp})$. 
By \cite[8.14]{Mori-1988}, 
$\KKK$ is locally a nested c.i. on $C\setminus\{P\}$ and 
$(1,3)$-monomializable at $P$, and we have $\ell$-isomorphisms 
\begin{equation} 
\label{IC-eq-(8.7.1)} 
\gr^i_C(\OOO,\KKK) \simeq (-1+(m-i)P{^\sharp}),\qquad i=1,\, 2
\end{equation} 
and an $\ell$-exact sequence 
\begin{equation}
\label{IC-eq-(8.7.2)} 
0\to (-1+(m-3)P{^\sharp}) \longrightarrow \gr^3_C(\OOO,\KKK) 
\longrightarrow (4P{^\sharp}) \to 0. 
\end{equation} 
By \eqref{IC-eq-(8.7.1)}$\totimes\omega_X$, we see 
$\gr^i_C(\omega_X,\KKK)\simeq (-1+(m-i-1)P{^\sharp})$ and
so $H^j(\gr^i_C(\omega_X,\KKK))=0$ for $i=1,\, 2$, $j=0,\, 1$ because 
\[
m-2,\, m-3 \in 2\ZZ_{+} + (m-2)\ZZ_{+}. 
\]
Now using \eqref{IC-eq-(8.7.2)}
$\totimes\omega_X$, we obtain 
\begin{equation*} 
0\to (-2+(2m-4)P{^\sharp}) \longrightarrow 
\gr^3_C(\omega_X,\KKK) \longrightarrow (-1+(m+3)P{^\sharp}) \to 0. 
\end{equation*} 
We note $(-1+(m+3)P{^\sharp}) \simeq \OOO(-1)$ as 
$\OOO_C$-modules because $3 \notin 2\ZZ_{+}+(m-2)\ZZ_{+}$ for $m \geq 7$. We similarly note that 
$(-2+(2m-4)P{^\sharp}) \simeq \OOO(-2)$ because $m-4 \notin 2\ZZ_{+}+(m-2)\ZZ_{+}$. 
Hence, $H^1(\gr^3_C(\omega_X,\KKK)) \neq 0$. 
Note that $\omega_X/\F^1(\omega_X,\KKK)=\gr_C^0\omega\simeq \OOO(-1)$.
Using the standard exact sequences 
\begin{equation*} 
0\to \gr^i_C(\omega_X,\KKK)\longrightarrow \omega_X/\F^{i+1}(\omega_X,\KKK) 
\longrightarrow \omega_X/\F^i(\omega_X,\KKK) \to 0, 
\end{equation*} 
we obtain $H^1(\omega_X/\F^4(\omega_X,\KKK))\neq 0$.
By \cite[4.4]{Mori-Prokhorov-2008} we have
\[
-K_X\cdot C=5/m \ge -K_X\cdot f^{-1}(o)=2,
\] 
a contradiction.
\end{proof}

\begin{thm} {\bf Proposition.}
\label{IC-(8.8)} 
\begin{enumerate}
\item
\label{IC-(8.8.1)}
$\OOO_F(-C)$ is an $\ell$-invertible 
$\OOO_F$-module with an $\ell$-free $\ell$-basis $y^{m-2}_1-
y^2_2$ at $P$ and an $\ell$-isomorphism. 
\begin{equation*} 
\OOO_C\totimes\OOO_F(-C) \simeq (4P{^\sharp}). 
\end{equation*}

\item
\label{IC-(8.8.2)}
$H^0(\OOO_F(-\nu C)) \twoheadrightarrow H^0(\OOO_C
\totimes\OOO_F(-\nu C))$ for all $\nu \geq 0$.

\item
\label{IC-(8.8.3)}
There are sections
$s_1$, $s_2 \in H^0(I_C)$ such that 
\begin{equation*}
\begin{array}{rll} 
s_1&\equiv (\unit)\cdot (y_1+\xi_1 y_2^{m-1})^2(y^{m-2}_1-y^2_2) 
&\mod y_4\quad \text{\rm near $P$}, 
\\[5pt]
s_2&\equiv (\unit)\cdot (y_2+\xi_2y_1^{m-1})(y^{m-2}_1-y^2_2)^{(m-1)/2} 
&\mod y_4\quad \text{\rm near $P$}, 
\end{array} 
\end{equation*} 
where $\xi_1$, $\xi_2\in \OOO_{X^\sharp}$ are invariants. 

\item
\label{IC-(8.8.4)}
$H^0(I_C) \twoheadrightarrow H^0(\gr^0_C J)=H^0(I_C/\F^3(\OOO,J)) \simeq \CC$.
\end{enumerate}
\end{thm}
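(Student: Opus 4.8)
The plan is to prove the four assertions in the order (i)$\Rightarrow$(ii)$\Rightarrow$(iii)$\Rightarrow$(iv), each feeding the next. Part (i) is a purely local computation at $P$. On $F=\{y_4=0\}/\muu_m$ the curve $C$ is cut out by the single semi-invariant $y^{m-2}_1-y^2_2$, so on the cover $\CC^2_{y_1,y_2}$ the ideal $\OOO_F(-C)$ is free of rank one with basis $y^{m-2}_1-y^2_2$; this is the asserted $\ell$-free $\ell$-basis and yields $\ell$-invertibility. Restricting to $C$ gives the conormal module $\OOO_C\totimes\OOO_F(-C)$, still generated by $y^{m-2}_1-y^2_2$, and since that generator carries the same $\muu_m$-weight as the local generator $u$ of the $(4P{^\sharp})$-summand in \eqref{IC-eq--l-splitting}, I read off $\OOO_C\totimes\OOO_F(-C)\simeq(4P{^\sharp})$.

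For part (ii), I would use (i) to write $\OOO_F(-\nu C)=\OOO_F(-C)^{\totimes\nu}$ and consider the restriction sequence
\[
0\to\OOO_F(-(\nu+1)C)\longrightarrow\OOO_F(-\nu C)\longrightarrow\OOO_C\totimes\OOO_F(-\nu C)\to 0 .
\]
The desired surjectivity on $H^0$ follows once $H^1(\OOO_F(-(\nu+1)C))=0$ for all $\nu\ge 0$. By adjunction $F\in|-K_X|$ forces $K_F=0$, and since $f_F$ contracts $C$ to $o_Z$ one has $C^2<0$ on $F$, so $-(\nu+1)C$ is $f_F$-ample. As $F$ is Du Val, hence canonical and klt, relative Kawamata--Viehweg vanishing applied to the birational part $f_1$ of the Stein factorization \eqref{IC=equation-Stein-factorization} gives $R^1(f_1)_*\OOO_F(-(\nu+1)C)=0$, and the finiteness of $f_2$ yields $H^1(\OOO_F(-(\nu+1)C))=0$.

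Part (iii) is the heart of the argument. First I would pin down the target leading terms as the lowest-weight invariant generators: $y_1$ and $y_2$ admit weight-preserving $\ell$-global completions $y_1+\xi_1y_2^{m-1}$ and $y_2+\xi_2y_1^{m-1}$ (note $y_2^{m-1}$ has weight $2$ and $y_1^{m-1}$ has weight $-2$ modulo $m$), and multiplying by the appropriate power of the basis $y^{m-2}_1-y^2_2$ produces the invariant sections $(y_1+\xi_1y_2^{m-1})^2(y^{m-2}_1-y^2_2)$ and $(y_2+\xi_2y_1^{m-1})(y^{m-2}_1-y^2_2)^{(m-1)/2}$ of $\OOO_F(-C)$ and $\OOO_F(-\tfrac{m-1}{2}C)$ respectively; their existence as honest global sections on $F$ is guaranteed by the surjectivity in (ii) onto $H^0(\OOO_C\totimes\OOO_F(-\nu C))$. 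It then remains to realize these as restrictions of sections $s_1,s_2\in H^0(I_C)$ on $X$. I would do this by extending the prescribed leading term one step at a time along the filtration $\F^\bullet_C\OOO$, the obstruction at each stage lying in an $H^1(\gr^i_C\OOO)$; these are cohomologies of line bundles on $C\simeq\PP^1$ whose degrees are governed by weights modulo $m$ and vanish precisely under congruence conditions of the type $a\in 2\ZZ_{+}+(m-2)\ZZ_{+}$ already exploited in \ref{IC-Proposition-(8.7)}. The freedom in these successive lifts is absorbed into the unit factors and into the invariants $\xi_1,\xi_2$. This lifting is the step I expect to be the main obstacle, since the naive restriction sequence $0\to\omega_X\to I_C\to\OOO_F(-C)\to 0$ has $H^1(\omega_X)\ne 0$ for a $\QQ$-conic bundle, so one must argue that the specific obstruction classes for $s_1,s_2$ vanish rather than invoke a blanket surjectivity.

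Finally, part (iv) follows by combining (iii) with Lemma \ref{IC-(8.5.3)}. Since $\gr^0_C J\simeq\OOO\oplus\OOO(-1)$ as $\OOO_C$-modules, $H^0(\gr^0_C J)\simeq\CC$ is one-dimensional, generated by the class of $y^2_1u+\alpha y_2y^2_4$ from \eqref{IC-eq-(8.5.2)}, so it suffices to exhibit a single section of $I_C$ with nonzero image. I take $s_1$ from (iii): modulo $y_4$ one has $u\equiv\mu_1(y^{m-2}_1-y^2_2)$, whence $s_1$ has leading term a unit times $y^2_1(y^{m-2}_1-y^2_2)$, agreeing up to a unit with $y^2_1u+\alpha y_2y^2_4$ modulo $y_4$. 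By Lemma \ref{IC-(8.5.3)} every generator of $\F^3(\OOO,J){^\sharp}$ carries either two factors $y^{m-2}_1-y^2_2$ or a factor $y_4$, so $y^2_1(y^{m-2}_1-y^2_2)\notin\F^3(\OOO,J){^\sharp}$; hence $s_1\notin\F^3(\OOO,J)$ near $P$ and its class is a nonzero element of the one-dimensional space $H^0(\gr^0_C J)$. Therefore $H^0(I_C)\to H^0(\gr^0_C J)\simeq\CC$ is surjective.
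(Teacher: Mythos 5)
Parts (i), (ii) and (iv) of your proposal are sound. Part (i) is the same local observation the paper makes; part (iv) is essentially the paper's own deduction from (iii); and in (ii) you replace the paper's short argument (vanishing of $H^1(\OOO_C\totimes\OOO_F(-\nu C))$ on $C\simeq\PP^1$ together with the fact that $C$ is a fiber of the proper $f$) by relative Kawamata--Viehweg vanishing for the birational part $f_1$ of the Stein factorization; this works, since $K_F\sim 0$ and $-C$ is $f_1$-ample, though it is heavier machinery than needed.

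The genuine gap is in (iii), and it is twofold. First, the surjectivity in (ii) does \emph{not} produce sections of $\OOO_F(-\nu C)$ of the stated form: it only controls the image in $\OOO_C\totimes\OOO_F(-\nu C)$, i.e.\ the leading coefficient of a section modulo the next step of the filtration, whereas (iii) asserts an exact factorization on all of $F$ (mod $y_4$). For $s_1$ that factorization says $\operatorname{div}_F(s_1)-C$ is a non-reduced (double) curve, and this fails for a general section with the correct image: $H^0(\OOO_F(-C))$ is identified with the maximal ideal of the \type{D_m}-point $F_Z=\{z^2+xy^2+x^{m-1}=0\}$, and while the section corresponding to $x$ has divisor $2\{x=z=0\}$ plus $C$ (whence the square), the section corresponding to, say, $x+z$ has reduced residual divisor yet the same leading term, so no absorption into units and $\xi_1$ can bring it to the form $(\unit)(y_1+\xi_1y_2^{m-1})^2(y_1^{m-2}-y_2^2)$. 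This exact product structure is not cosmetic: it is what later yields, e.g., the nonzero coefficient of $y_2^m$ and the absence of $y_1^2y_2^2$ in $s_2$, used in \ref{IC-(8.10)} and \ref{IC-(8.11)}. Second, as you yourself flag, your lifting from $F$ to $X$ is not carried out, and the obstruction is real, since $H^1(\omega_X)\simeq H^0(\omega_Z)\neq 0$ for a $\QQ$-conic bundle germ. The paper kills both problems at once by exploiting the $\QQ$-conic bundle structure: it chooses coordinates $(x,y)$ on the smooth base $(Z,o)$ so that, by Catanese's description of the double cover, $f_2:(F_Z,o_Z)\to (Z,o)$ is the projection of $\{z^2+xy^2+x^{m-1}=0\}$ to $\CC^2_{x,y}$, and simply sets $s_1=f^*x$, $s_2=f^*y$; these lie in $H^0(I_C)$ by construction, so no lifting is needed, and their local shape at $P$ (the multiplicities $1$ and $(m-1)/2$ of $C$, the square structure of $\gamma_1$ coming from the double curve $\operatorname{div}_{F_Z}(x)=2\{x=z=0\}$, and $\gamma_2|_C=y_2|_C$) is read off from the multiplicity diagrams on the minimal resolution and intersection numbers with $C$. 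This global input from the base is the key idea your argument is missing, and without it the precise statement of (iii) is not reachable along the lines you sketch.
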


\begin{proof} \xref{IC-(8.8.1)} follows from the construction of $F$. Hence, 
$H^1(\OOO_C\totimes\OOO_F(-\nu C))=0$ for all $\nu 
\geq 0$, and $H^1(\OOO_F(-\nu C))=0$ since $C$ 
is a fiber of proper $f$. Thus we have \xref{IC-(8.8.2)}.

To prove \xref{IC-(8.8.3)} 
consider the Stein factorization 
\eqref{IC=equation-Stein-factorization} and as in the proof of Lemma \ref{lemma-IC-H}
we take an embedding  $(F_Z,o_Z)\subset\CC^3_{x,y,z}$ so that $(F_Z,o_Z)$ is given 
by the equation $z^2+xy^2+x^{m-1}$ and
the map $f_2:(F_Z,o_Z)\to (Z,o)$ is just the projection to the $(x,y)$-plane.
Take $s_1=f^*x$ and $s_2=f^*y$. The weighted blowup of $(F_Z,o_Z)$ with weights 
$(2,m-2,m-1)$ extracts the central vertex of the \type{D_m}-diagram \eqref{IC-eq-Dn-diagram}.
The multiplicity of the corresponding exceptional curve in $f_2^*x$ and $f_2^*y$ is equal to
$2$ and $m-2$, respectively. Using this one can easily show that 
multiplicities of all exceptional curves in $f_2^*x$ and $f_2^*y$,
respectively, are given by the following 
diagrams 
\[
\begin{array}{c@\,c@\,c@\,c@\,c@\,c@\,c}
&&&&\overset1\bullet
\\[-4pt]
&&&&|
\\[-4pt]
\underset2\diamond\lin&\underset2\circ&\lin\cdots\lin
\underset2\circ&\lin&\underset2\circ&\lin&\underset1\circ
\end{array} 
\hspace{19pt}
\begin{array}{c@\,c@\,c@{\hspace{-4pt}}c@{\hspace{-3pt}}c@{\hspace{-3pt}}c@{\hspace{-4pt}}c@{\hspace{-4pt}}c@\,c}
&&&&\overset{\frac{m-1}{2}}\bullet&\lin&\overset{1}\diamond
\\[-2pt]
&&&&|
\\[-4pt]
\underset1\circ\lin&\underset2\circ&\lin\cdots\lin
\underset{m-3}\circ&\lin&\underset{m-2}\circ&\lin&\underset{\frac{m-1}{2}}\circ&\lin& \underset{1}\diamond
\end{array} 
\]
where the vertex $\bullet$, as usual, corresponds to $C$ and vertices $\diamond$ correspond
to components of the proper transforms of $\{f_2^*x=0\}$ and $\{f_2^*y=0\}$.
The multiplicity of $C$ is exactly the exponent of $y^{m-2}_1-y^2_2$ in $s_i$ $\mod y_4$. 
Therefore, 
\[
s_1\equiv \gamma_1(y^{m-2}_1-y^2_2) \qquad 
s_2\equiv \gamma_2(y^{m-2}_1-y^2_2)^{(m-1)/2} \mod y_4, 
\]
where
$\gamma_i\in \OOO_{X^\sharp}$ are semi-invariants.
Using the above diagrams, we see
$(\{\gamma_1=0\} \cdot C)_F=-4/m$ and $(\{\gamma_2=0\} \cdot C)_F=(m-2)/m$
because $(C^2)_F=4/m$ by (i).
Since $y_1y_2$ is of weight $0$, we have 
\[
\gamma_1 = (\unit) \cdot (y_1+y_2^{m-1}\xi_1)^2 \mod y_4
\]
for some $\xi_1 \in \OOO_X$.
Indeed, since $\gamma_1=0$ defines a double curve on $F$, 
one has $\gamma_1=(\unit)\cdot \delta^2 \mod y_4$
for some $\delta \in \OOO_{X^{\sharp}}$ with weight $\equiv 2$ such that
$\delta|_C=y_1|_C$.

Similarly, we have
$\gamma_2|_C= y_2|_C$.
Hence,
\[
\gamma_2 =(\unit) \cdot (y_2+y_1^{m-1}\xi_2) \mod y_4.
\]

Finally, \xref{IC-(8.8.4)} follows from \xref{IC-(8.8.3)} because $H^0(\gr^0_C J) \simeq \CC$. 
\end{proof}

\begin{de}
\label{IC-(8.9)}
By Proposition \xref{IC-Proposition-(8.7)} there are four cases to treat. 
\begin{sde}
\label{IC-(8.9.1)}
{\bf Case} $m \geq 7$, $\alpha (P) \neq 0$. 
\end{sde}

\begin{sde}
\label{IC-(8.9.2)}
{\bf Case} $m=5$, $\lambda_1(P) \neq 0$. 
\end{sde}

\begin{sde}
\label{IC-(8.9.3)}
{\bf Case} $m=5$, $\lambda_1(P)=0$, $\alpha (P) \neq 0$.
 \end{sde}

\begin{sde}
\label{IC-(8.9.4)}
{\bf Case} $m=5$, $\lambda_1(P)=0$, $\alpha (P)=0$. 
\end{sde}

We shall show that cases \ref{IC-(8.9.1)}, \ref{IC-(8.9.2)}, \ref{IC-(8.9.3)}
do not occur and \ref{IC-(8.9.4)} implies \ref{IC-(8.3.2)}.
\end{de}

\begin{de}{\bf Proof of \xref{IC-main}; cases \xref{IC-(8.9.1)} and \xref{IC-(8.9.3)}.}
\label{IC-(8.10)}
By \eqref{IC-eq-(8.5.2)} 
and Proposition \xref{IC-(8.8)}, a general section $s \in H^0(I_C)$ satisfies 
\begin{equation*} 
s \equiv (\unit)\cdot \bigl(y^2_1u +\alpha y_2y^2_4\bigr) \mod \F^3(\OOO,J)\quad \text{\rm at $P$}, 
\end{equation*} 
where $\alpha (P) \neq 0$ by assumption. Let us take $s_2$ given 
in \xref{IC-(8.8.3)} of Proposition \ref{IC-(8.8)}. 
We claim that $s_2$ belongs to 
$H^0(\F^3(\OOO,J))$. Indeed, it is obvious that 
$s \notin \CC\cdot s_2 + \F^3(\OOO,J)$ near $P$. Hence by 
$H^0(I_C/\F^3(\OOO,J))=\CC\cdot s$, we have 
$s_2 \in H^0(\F^3(\OOO,J))$ as claimed. By Lemma \xref{IC-(8.5.3)}, we see that the coefficient 
of $y_2y^2_4$ (resp. $y^m_2$) in the Taylor expansion of $s_2$ 
at $P{^\sharp}$ is $0$ (resp. 
non-zero) because $m \geq 7$ or $\lambda_1(P)=0$. We now 
analyze the set $H=\{s=0\}$. 
By Bertini's theorem, $H$ is smooth outside of $C$. Since $\OOO\cdot s$ 
is the unique $\OOO$ in $\gr^1_C\OOO \simeq \OOO\oplus \OOO(-1)$, $H$ is smooth on $C\setminus\{P\}$. To 
study $(H,P)$, we can apply \cite[10.7]{Kollar-Mori-1992}. 
Indeed, if $\lambda_1(P)=0$, 
then $\mu_1(P) \neq 0$ by the construction \xref{IC-(8.2)}. Thus \cite[10.7.1]{Kollar-Mori-1992} holds by 
Lemma 
\xref{IC-(8.5.3)}. Replacing $s$ with a general linear combination of 
$s$ and $s_2$ we see that 
\cite[10.7.2]{Kollar-Mori-1992} is satisfied. Since $m \geq 7$ or 
$\lambda_1(P)=0$, we can now apply \cite[10.7]{Kollar-Mori-1992}.
One can see that the contraction $f_H:H\to T$ must be birational in this case,
which is a contradiction. 
\end{de}

\begin{de}{\bf Proof of \xref{IC-main}; case \xref{IC-(8.9.2)}.}
\label{IC-(8.11)}
The argument is the same as \xref{IC-(8.10)} except that we need to check 
the conditions of \cite[10.7]{Kollar-Mori-1992}. 
Note that \eqref{IC-IC-notation-equation-u} has the form  
$u=\lambda_1y_4+\mu_1(y_1^3-y_2^2)$.
Since $\lambda_1(P) \neq 0$, by a coordinate change we can make 
$\mu_1(P) \neq 0$. 
Let $D:=\{y_1=0\}/\muu_m \in |-2K_X|$ and let 
\begin{equation*} 
\phi_D:=\frac{u-\lambda_1(P)y_4}{\dd y_1\wedge \dd y_2\wedge \dd y_4} =
 \frac{(\lambda_1-\lambda_1(P))y_4 + \mu_1(y^3_1-y^2_2)}{\dd y_1\wedge \dd y_2\wedge \dd y_4}
\in \OOO_D(-K_X). 
\end{equation*}
Arguments in \cite[3.1]{Mori-Prokhorov-2008III} show that there exists 
a section $\phi \in H^0(\OOO(-K_X))$ sent to $\phi_D$ modulo $\omega_Z$.
Thus the image of $\phi$ under the homomorphism 
\begin{equation*} 
I_C\totimes\OOO_X(-K_X) \twoheadrightarrow 
\gr^1_C\OOO_X(-K_X)=(1) \toplus (0) 
\twoheadrightarrow (0) 
\end{equation*} 
is non-zero because $\lambda_1(P) \neq 0$. 
Hence $F'=\{\phi=0\} \in |-K_X|$ is smooth 
outside of $P$ and we may choose $\phi$ so that $F'$ is 
furthermore normal by Bertini's theorem. We have an $\ell$-splitting 
\begin{equation*} 
\gr^1_C\OOO=(4P{^\sharp}) \toplus \OOO_C(-F'). 
\end{equation*} 
By the construction of $F'$, we see that 
$(F',P)=\{v=0\}/\muu_m$, where 
$v=y^3_1-y^2_2+\lambda_1' y_4$ for some 
$\lambda_1' \in \OOO_{C,P}$ such that $\lambda_1'(P)=0$. As in Proposition 
\xref{IC-(8.8)}, we see that $\OOO_{F'}(-C)$ is an 
$\ell$-invertible $\OOO_{F'}$-module with an $\ell$-free 
$\ell$-basis $u$ at $P$ and there exists an $\ell$-isomorphism 
\begin{equation*} 
\OOO_C\totimes\OOO_{F'}(-C) \simeq (4P{^\sharp}). 
\end{equation*} 
We similarly see 
\begin{equation*} 
H^0(\OOO_{F'}(-\nu C)) 
\twoheadrightarrow 
H^0(\OOO_C\totimes\OOO_{F'}(-\nu C))\quad 
\text{for all $\nu \geq 0$}. 
\end{equation*} 
We note that $y^2_1u$ and $y_2u^2$ are bases of 
$\OOO_C\totimes\OOO_{F'}(-\nu C)$ at $P$ for $\nu=1$ and $2$, respectively. Thus, for 
arbitrary $a_1,\, a_2 \in \CC$, there exist 
a section $s_0'\in H^0(\OOO_{F'}(-C))$ such that
\begin{equation*} 
s_0' \equiv a_1y^2_1u + a_2y_2u^2 \mod (v,u^3). 
\end{equation*} 
Recall that the map $H^0(\OOO_X)\to H^0(\OOO_{F'})$ is surjective 
modulo $f^*\omega_Z$ \cite[Proposition 2.1]{Mori-Prokhorov-2008III}.
In our situation, sections of $f^*\omega_Z$ lifted to $\CC^3_{y_1,y_2,y_4}$
are contained into $\wedge^2\Omega^1_X$.
We claim 
\begin{equation}
\label{IC-inclusion-omega}
\bigwedge\limits^2\Omega^1_X
 \subset (y_1, y_2, y_4)^3 \cdot \Omega_{X^\sharp}^2
\subset (y_1, y_2, y_4)^4\cdot \omega_{F'^\sharp}.
\end{equation}
on the index-one cover $F'^\sharp \subset X^\sharp$ of $F' \subset X$.

Note first that the local coordinates of $X$ at $P$ are 
\[
y_1y_2,\quad y_1^5,\quad y_2^5,\quad y_1^2y_4,\quad y_2^3y_4,\quad y_2y_4^2.
\]
Since $y_1y_2$ is the only term of degree 2, and the
rest are of degree $\ge 3$,
we see that $\wedge^2 \Omega_X^1 \subset (y_1,y_2,y_4)^3\cdot\Omega_{X^\sharp}^2$,
the first inclusion.

Since $\phi=\beta_1(y_1^3-y_2^2)+ \beta_2 y_4$ with $\beta_1,\, \beta_2 \in \OOO_X$ such that
$\beta_2(P)=0$,
we have $\Omega_{X^\sharp}^2 |_{F'^\sharp}\subset (y_1, y_2, y_4)\cdot \omega_{F'^\sharp}$
because
\[
\Omega:=
\left.\frac{\dd y_2\wedge \dd y_4}{\partial \phi/\partial y_1}\right|_{F'^\sharp}=
\left.\pm \frac{\dd y_1\wedge \dd y_4}{\partial \phi/\partial y_2}\right|_{F'^\sharp}=
\left.\pm \frac{\dd y_1\wedge \dd y_2}{\partial \phi/\partial y_4}\right|_{F'^\sharp}
\in \omega_{F'^\sharp},
\]
which settles the second inclusion.

From \eqref{IC-inclusion-omega} and $(v,u^3) \subset (y_1^3,y_2^2,y_4^3)$
we see that 
there exists $s' \in H^0(I_C)$ such that
\[
s' \equiv a_1y_2y_4+a_2y_2y_4^2 \mod (y_1,y_2,y_4)^4+(y_1^3,y_2^2,y_4^3).
\]
By this, we obtain non-vanishing of the coefficient of $x_2x_3^2$
in \cite[10.7]{Kollar-Mori-1992}.
Note that \cite[10.7.1]{Kollar-Mori-1992}
is satisfied because $\lambda_1(P)\neq 0$ and
\cite[10.7.3]{Kollar-Mori-1992} is satisfied because 
the term $y_2^5$ appears and $y_1^2y_2^2$ does not appear in $s_2$.
The rest is the same as \xref{IC-(8.10)}.

\begin{sde}
{\bf Remark.}
In \cite{Kollar-Mori-1992}, the explanation at the beginning of \cite[8.11]{Kollar-Mori-1992} 
was not appropriate; the non-vanishing of the coefficient of 
$x_2x_3^2$ of \cite[10.7]{Kollar-Mori-1992} as well as \cite[10.7.3]{Kollar-Mori-1992} should
have been verified.
The last three lines of our \ref{IC-(8.11)} supplements the
insufficient treatment in \cite[8.11]{Kollar-Mori-1992}.
\end{sde}
\end{de}

\begin{de} {\bf Case \xref{IC-(8.9.4)}. } 
Then $m=5$ and $\lambda_1(P)=\alpha (P)=0$.
Since $\lambda_1(P)=0$, 
we have $\mu_1(P) \neq 0$ because $u$ is an $\ell$-basis 
(see \eqref{IC-IC-notation-equation-u}). Since $\alpha (P)=0$, we have 
$\alpha y_2=\lambda_2y^4_1$ 
for some $\lambda_2 \in \OOO_{C,P}$ as in Lemma \xref{IC-lemma-l-split-(8.6)}. Thus a general 
section $s \in H^0(I_C)$ 
satisfies the following relation near $P$: 
\begin{equation}
\label{IC-eq-(8.12.1)} 
s \equiv (\unit)\cdot y^2_1(u+\lambda_2y^2_1y^2_4) \mod \F^3(\OOO,J).
\end{equation} 
Hence $s$ does not contain any of the terms $y_1y_2$, $y_1^2y_4$, $y_2y_4^2$
and contains terms $y_1^5$, $y_1^2y_2^2$.
By the lemma below $s$ contains also $y_2^3y_4$.

\begin{sthm}{\bf Lemma.}\label{IC-last-lemma-1}
Let $\tau$ be the weight $\tau=\frac15(4,1,2)$ 
and let 
$(H,P)\subset \CC^3/\muu_5(2,3,1)$ be a normal surface singularity given by 
$\phi(x_1,x_2,x_3)=0$, where $\phi$ is a
$\muu_5$-invariant that does not contain any terms of $\tau$-weight
$<2$.
Then $(H,P)$ is not a rational singularity. 
\end{sthm}

\begin{proof}
According to \cite{Elkik-1978} we may assume that the coefficients of
$\phi$ are general under the assumption $\phi_{\tau=1}=0$.
Consider the weighted blowup with weight $\tau$.
The exceptional divisor $\Upsilon$ is given in $\PP(4,1,2)$ by the equation
$\phi_{\tau=2}(x_1,x_2,x_3)=0$
or, equivalently, in $\PP(2,1,1)$ by $\phi_{\tau=2}(x_1,x_2^{1/2},x_3)=0$.
Thus, $\Upsilon\in |\OOO_{\PP(2,1,1)}(5)|$ is a general member.
By Bertini's theorem $\Upsilon$ is smooth
and the pair $(\PP(2,1,1),\Upsilon)$ is PLT.
By the subadjunction formula
\[
2p_a(\Upsilon)-2=(K_{\PP(2,1,1)}+\Upsilon) \cdot \Upsilon-\frac12=2.
\]
Hence, $\Upsilon$ is not rational.
\end{proof}

\begin{sthm}{\bf Lemma.}\label{IC-last-lemma-2}
The equation $s$ contains the term $y_1y_4^3$. 
\end{sthm}
\begin{proof}
Since $\alpha(P)=0$, we can write $\alpha=y_1y_2\beta$ for some $\beta\in \OOO_{C,P}$.
The unique $\OOO\subset \gr_C^0J$ is generated near $P$ by 
\[
y_1^2u+(y_1y_2\beta)y_2y_4^2=
y_1^2u+y_1^4\beta y_4^2=
y_1^2(u+y_1\beta y_4^2) \in \F^3(\OOO,J). 
\]
By Lemma \ref{IC-lemma-l-split-(8.6)} the sequence \eqref{IC-eq-(8.5.1)} splits and we have 
\[
\begin{array}{c@{\hspace{1pt}}c@{\hspace{-7pt}}c}
\gr^0_C J \simeq (4P{^\sharp}) &\toplus & \OOO_C(-2F)
\\[-2.5pt]
&&\| 
\\[-2.5pt]
&&(-1+(3P^\sharp)).
\end{array}
\]
Let $\KKK$ be the $C$-laminal 
ideal such that $J \supset \KKK \supset \F^3(\OOO_C,J)$ and $\KKK/\F^3(\OOO,J)=(4P{^\sharp})$. 
Then $\KKK$ is locally a nested c.i. on $C\setminus\{P\}$ and 
$(y_4,u)$ is a $(1,3)$-monomializable $\ell$-basis of 
$I_C\supset \KKK$ at $P$ (where $u$ is given by \eqref{IC-IC-notation-equation-u}).
We have 
\[
\begin{array}{ccl}
0\to &(-1+2P^\sharp) & \longrightarrow \gr_C^0\KKK 
\longrightarrow (4P^\sharp) \to 0
\\[-2.5pt]
&\|
\\[-2.5pt]
&\OOO_C(-3F)
\end{array}
\]
Since $H^1(\OOO_C(-3F)\totimes \omega)\neq 0$, as in the proof of Proposition 
\ref{IC-Proposition-(8.7)} the sequence does not split.
So, locally near $P$, the sheaf $\gr_C^0\KKK$ has a section 
$y_1^2u+\gamma y_1y_4^3$ with $\gamma(P)\neq 0$.
\end{proof}

Thus, by the two lemmas
\ref{IC-last-lemma-1} and \ref{IC-last-lemma-2} above, $s$ does not contain any of the terms 
$y_1y_2$, $y_1^2y_4$, $y_2y_4^2$
and contains terms $y_1^5$, $y_1^2y_2^2$, $y_2^3y_4$, $y_1y_4^3$.
Therefore, \cite[10.8]{Kollar-Mori-1992} can be applied to $(H,P)$.
It is easy to see that the whole configuration contracts to a curve. 
We get \ref{IC-(8.3.2)}. This completes the proof of Theorem \xref{IC-main}.
\end{de}

\section{Case \type{(IIB)}}

\begin{de}{\bf Setup.}
\label{IIB-local-description}
Let $(X,P)$ be the germ of a three-dimensional terminal singularity and let $C\subset (X,C)$ be 
a smooth curve. Recall that the triple $(X,C,P)$ is said to be of type \type{(IIB)} if 
$(X,P)$ is a terminal singularity of type \type{cAx/4} and
there are analytic isomorphisms
\[
(X,P) \simeq \{y_1^2-y_2^3+\alpha =0\}/\muu_4(3,2,1,1)\subset \CC^4_{y_1,\dots,y_4} /\muu_4(3,2,1,1),
\]
\[
C\simeq \{y_1^2-y_2^3=y_3=y_4=0\}/\muu_4 (3,2,1,1),
\]
where $\alpha=\alpha(y_1,\dots,y_4)\in (y_3,\, y_4)$ 
is a semi-invariant with $\wt \alpha\equiv 2\mod 4$ and 
$\alpha_2(0,0,y_3,y_4)\neq 0$ (see \cite[A.3]{Mori-1988}).
\begin{sde}{\bf Definition.}\label{IIB-local-definition}
We say that $(X,P)$ is a \textit{simple} (resp. \textit{double}) \type{cAx/4}-point if $\rk \alpha_2(0,0,y_3,y_4)=2$
(resp. $\rk \alpha_2(0,0,y_3,y_4)=1$). 
\end{sde}

\begin{sde} 
\label{IIB-notation-1}
Let $(X,C)$ be an extremal curve germ and let $f\colon (X, C) \rightarrow (Z, o)$ be 
the corresponding contraction. 
In this section we assume that $C$ is irreducible and has a point 
$P$ of type \type{(IIB)}.
According to \cite[Theorem 4.5]{Kollar-Mori-1992} the germ $(X,C)$ is not flipping.
Recall that $(X,C)$ is locally primitive at $P$ \cite[4.2]{Mori-1988}.
Moreover, $P$ is the only singular point on \cite[Theorem 6.7]{Mori-1988}, 
\cite[Theorem 8.6, Lemma 7.1.2]{Mori-Prokhorov-2008}.
Thus the group $\Cl (Z,o)$ has no torsion.
Therefore, $f$ is either a divisorial contraction to a cDV point
or a conic bundle over a smooth base. 
\end{sde}
\end{de}

\begin{de}\label{IIB-ge}
According to \cite[Theorem 2.2]{Kollar-Mori-1992} and 
\cite{Mori-Prokhorov-2008III} a general member $F\in |-K_X|$ contains $C$, has only Du Val singularities,
and the graph $\Delta(F,C)$ has the form
\[
\begin{array}{c@{\,}c@{\,}c@{\,}c@{\,}c@{\,}c@{\,}c@{\,}c@{\,}c}
&&&&\circ
\\[-4pt]
&&&&|
\\[-4pt]
\circ&\lin&\circ&\lin&\circ&\lin&\circ&\lin&\bullet
\end{array}
\]
where all the vertices correspond to $(-2)$-curves and $\bullet$ corresponds to $C$.
Under the identifications
of \ref{IIB-local-description}, 
a general member $F\in |-K_X|$ near $P$ is given by $\lambda y_3+\mu y_4=0$
for some $\lambda,\, \mu \in \OOO_X$
such that $\lambda(0)$, $\mu(0)$ are general in $\CC^*$
\cite[2.11]{Kollar-Mori-1992}, \cite[\S 4]{Mori-Prokhorov-2008III}.
\end{de}

\begin{de}
Let $H$ be a general member of $|\OOO_{X}|_C$, 
let $T:=f(H)$, and let $\Gamma:=H\cap F$. 

\begin{sde}
If $f$ is divisorial, we put $F_Z:=f(F)$ and $\Gamma_Z:=f(\Gamma)$. 
Then $F_Z\in |-K_Z|$, $T$ is a general hyperplane section of 
$(Z,o)$ and $\Gamma_Z$ is a general hyperplane section of $F_Z$. 
\end{sde}
\begin{sde}\label{IIB-Stein-factorization}
If $f$ is a $\QQ$-conic bundle, we consider 
the Stein factorization 
\[
f_F: (F,C)\stackrel{f_1}\longrightarrow (F_Z, o_Z) \stackrel{f_2}\longrightarrow (Z,o).
\]
Here we put $\Gamma_Z:=f_1(\Gamma)$.
\end{sde}

In both cases $F_Z$ is a Du Val singularity of type \type{E_6} by \ref{IIB-ge}.
\end{de}

\begin{thm} {\bf Lemma.}
\label{lemma-IIB-H}
\begin{enumerate}
 \item 
$H$ is normal, has only rational singularities, and smooth outside of $P$\textup;

 \item 
$\Gamma=C+\Gamma_1$ \textup(as a scheme\textup), where $\Gamma_1$ is a reduced irreducible curve\textup;
 \item 
if $f$ is birational, then $T=f(H)$ is Du Val singularity of type 
\type{E_6}, \type{D_5}, \type{D_4}, \type{A_4},\dots,\type{A_1}
\textup(or smooth\textup).
\end{enumerate}
\end{thm}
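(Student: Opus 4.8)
The plan is to follow the proof of Lemma \ref{lemma-IC-H} almost verbatim, with the \type{D_m}-configuration used there replaced by the \type{E_6}-configuration of \ref{IIB-ge}. Set $T:=f(H)$ and $\Gamma:=H\cap F$, and recall that $\Delta(F,C)$ is the \type{E_6} graph of $(-2)$-curves in which $C$ is the $\bullet$ at the end of a length-two arm, while $F_Z$ is Du Val of type \type{E_6}. I would first note that, since $f_*\OOO_X=\OOO_Z$, a general $H\in|\OOO_X|_C$ is $H=f^*T=f^{-1}(T)$ for a general hyperplane section $(T,o_T)$ of $(Z,o)$; thus in the $\QQ$-conic bundle case $T$ is a smooth curve germ and $f_H\colon H\to T$ is the conic bundle restricted to it, whereas in the divisorial case $T$ is a general hyperplane section of the cDV point $Z$ and $f_H$ is birational. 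Both cases are treated in parallel, the only difference being the description of $(F_Z,o_Z)$ together with the curve $\Gamma_Z$ on it: in the divisorial case $\Gamma_Z=f(\Gamma)$ is by construction a general hyperplane section of $F_Z$, and in the conic bundle case I pass to the Stein factorization \ref{IIB-Stein-factorization} and, exactly as for \ref{lemma-IC-H}, use Catanese's normal form \cite{Catanese-1987} for the double cover $(F_Z,o_Z)\to(Z,o)$ to write $\Gamma_Z$ explicitly.

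Parts (i) and (ii). With $\Gamma_Z$ in hand I would compute the dual graph $\Delta(F,\Gamma)$ on the minimal resolution of $F$ by adjoining to the \type{E_6} graph $\Delta(F,C)$ the strict transform $\Gamma_1$ of $\Gamma_Z$. Since $\Gamma_Z$ is a general hyperplane section of the \type{E_6}-surface $F_Z$, a fundamental-cycle computation shows that $\Gamma_1$ is a single smooth curve meeting the exceptional configuration transversally at one vertex, which is placed away from $C$ (the short-arm vertex in the divisorial case). Reading off this graph gives at once that $\Gamma$ is reduced, equal to $C+\Gamma_1$ with $\Gamma_1$ reduced and irreducible, and that $\Gamma_1\cap C=\{P\}$; this is (ii). Consequently $H$ and $F$ meet transversally along $C\setminus\{P\}$, so $H$ is smooth there, and Bertini gives smoothness off $C$; hence $H$ is smooth outside $P$. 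Being a Cartier divisor it is $S_2$, and regularity in codimension one now gives normality. Finally $f_H$ is a rational curve fibration in the conic bundle case and birational onto the Du Val (hence rational) germ $T$ in the divisorial case; in both cases $R^1f_{H*}\OOO_H=0$, so $H$ has only rational singularities. This proves (i).

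Part (iii). Here $f$ is birational and, as noted, $T$ is a general hyperplane section of the cDV point $(Z,o)$, hence Du Val. To bound its type I would compare $T$ with $F_Z$ inside the family of hyperplane sections of $(Z,o)$: the member $F_Z\in|-K_Z|$ is a special one of type \type{E_6} while $T$ is the general member, so $T$ degenerates to $F_Z$ and thus $T$ occurs in a deformation of an \type{E_6}-singularity. By the adjacency of Du Val singularities the only possibilities are \type{E_6}, \type{D_5}, \type{A_5}, \type{D_4}, \type{A_4},\dots,\type{A_1}, or smooth, and it remains only to exclude \type{A_5}.

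The exclusion of \type{A_5} is the one point not covered by the soft deformation argument, and I expect it to be the main obstacle. I would settle it using the explicit picture obtained above: the common curve $\Gamma_Z=T\cap F_Z$ is, on the \type{E_6}-side, the fixed general-section curve whose strict transform meets only the vertex opposite $C$, and it must simultaneously sit on $T$. Matching this curve against the general-section data of each candidate Du Val type, together with the index-four (cAx/4) structure of $P$ from \ref{IIB-local-description} and the resulting position of $\Gamma_1$, is incompatible with an \type{A_5}-chain. The remaining, finer determination of which surviving type actually occurs is the content of the later case analysis and is not needed for the lemma.
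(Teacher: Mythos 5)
Your treatment of parts (i) and (ii) is correct and is essentially the paper's own proof: the same case division (divisorial versus $\QQ$-conic bundle), the same use of the Stein factorization and Catanese's normal form for the double cover, the same dual graph $\Delta(F,\Gamma)$ with fundamental-cycle coefficients $1,2,3,2,1;2$ and the strict transform of $\Gamma_Z$ attached at the short-arm vertex as in \eqref{equation-IIB-graph-E6}, and the same deduction of smoothness along $C\setminus\{P\}$ from the coefficient $1$ at $C$, Bertini off $C$, normality, and rationality via ``birational onto a Du Val germ'' resp.\ ``rational curve fibration''.

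The gap is in part (iii), exactly where you predicted it. Your adjacency step reproduces the paper's one-line argument (the paper invokes the existence of the $\mathrm{E}_6$ section $F_Z$ and cites Arnold), and you correctly observe that pure adjacency theory leaves $\mathrm{A}_5$ on the table. But your proposed exclusion of $\mathrm{A}_5$ is not an argument: ``matching this curve against the general-section data of each candidate Du Val type, together with the index-four (\type{cAx/4}) structure of $P$'' names no invariant and derives no contradiction. It also points in the wrong direction: once $F_Z$ is known to be a hypersurface section of $(Z,o)$ of type $\mathrm{E}_6$, the exclusion is a purely local statement about the cDV germ $(Z,o)$ downstairs, and the \type{cAx/4} structure of $P\in X$ plays no further role. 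What is actually needed is: a cDV point whose general hyperplane section is $\mathrm{A}_5$ (i.e.\ a $cA_5$ point) admits no hypersurface section of type $\mathrm{E}_6$ whatsoever. This can be proved from the normal form: a $cA_n$ point with $n\ge 2$ is $\{xy=f(z,w)\}$ with $\mult f=n+1$; a section with a corank-$2$ singularity at $o$ must be cut by a smooth hypersurface whose tangent plane is $\{x+by=0\}$, $b\ne 0$ (all other tangent planes give quadratic rank $\ge 2$, hence type $\mathrm{A}$, or a non-isolated singularity), and after completing the square such a section is $u^2+F(z,w)$, where $F$ equals $f$ plus a constant times the square of a binary quadric plus terms of order $\ge 5$; hence $\mult F\ge\min(n+1,4)$, whereas every corank-$2$ Du Val singularity is $u^2+(\text{curve of multiplicity }3)$. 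So for $n\ge 3$ no section of a $cA_n$ point is of type $\mathrm{D}$ or $\mathrm{E}$ at all, and in particular $T\ne\mathrm{A}_5$. (Alternatively one can argue that in a family of sections all passing through and singular at the fixed point $o$, an $\mathrm{E}_6$ cannot deform to $\mathrm{A}_k$ with $k\ge 4$, because the relevant fourth-order coefficient of the Morse residual stays a unit.) Some such statement --- this is what the paper's citation of Arnold is standing in for --- must be supplied; without it the list in the lemma, which omits $\mathrm{A}_5$, is not established.
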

\begin{proof}
Consider two cases:

\begin{sde}\label{case-IIB-H-divisorial}
{\bf Case: $f$ is divisorial.}
Since the point $(Z,o)$ is terminal of index $1$, the germ $(T,o)$ is a Du Val singularity.
Since $\Gamma_Z$ is a general hyperplane section of $F_Z$,
we that the graph $\Delta(F,\Gamma)$ has the form
 \begin{equation}
\label{equation-IIB-graph-E6}
\begin{array}{c@{\,}c@{\,}c@{\,}c@{\,}c@{\,}c@{\,}c@{\,}c@{\,}c}
&&&&\diamond
\\[-5pt]
&&&&|
\\[0pt]
&&&&\overset{2}\circ
\\[-5pt]
&&&&|
\\[-4pt]
\underset{1}\circ&\lin&\underset{2}\circ&\lin&\underset{3}\circ&\lin&\underset{2}\circ&\lin&\underset{1}{\bullet}
\end{array}
 \end{equation}
where, as usual, $\diamond$ corresponds to the proper transform of $\Gamma_Z$
and numbers attached to vertices are coefficients of corresponding exceptional curves in the pull-back of $\Gamma_Z$.
By Bertini's theorem $H$ is smooth outside of $C$.
Since the coefficient of $C$ equals to $1$, $F\cap H=C+\Gamma$ (as a scheme), so $H$ is smooth 
outside of $P$. In particular, $H$ is normal.
Since $f_H: H\to T$ is a birational contraction and $(T,o)$
is a Du Val singularity, the singularities of $H$ are rational.

\begin{sde}\label{case-IIB-H-conic-bundle}
{\bf Case: $f$ is a $\QQ$-conic bundle.}
We may assume that, in some coordinate system, 
the germ $(F_Z, o_Z)$ is given by
$x^2+y^3+z^4=0$. Then by \cite{Catanese-1987}
up to coordinate change the double cover $(F_Z, o_Z) \longrightarrow (Z,o)$
is just the projection to the $(y,z)$-plane.
Hence we may assume that $\Gamma_Z$ is given by $z=0$.
As in the case \ref{case-IIB-H-divisorial} we see that the graph $\Delta(F,\Gamma)$ has the form
\eqref{equation-IIB-graph-E6}. Therefore, $H$ is smooth outside of $P$.
The restriction $f_H: H\to T$ is a rational curve fibration.
Hence $H$ has only rational singularities.
\end{sde}
(iii) follows by the fact that there is a hyperplane section $F_Z$ of $(Z,o)$
which is 
Du Val of type \type{E_6} (see e.g. \cite{Arnold1972}).
\end{sde}
\end{proof}

We need a more detailed description of $(H,C)$ near $P$.
\begin{sthm} {\bf Lemma.}\label{II-B-lemma-equation-HH}
In the notation of \xref {IIB-local-description} the surface 
$H\subset X$ is locally near $P$ given by 
the equation $y_3 v_3+ y_4 v_4=0$, where $v_3,\, v_4\in \OOO_{P^\sharp, X^\sharp}$
are semi-invariants with $\wt v_i\equiv 3$
and at least one of $v_3$ or $v_4$ contains a linear term in $y_1$.
\end{sthm}

\begin{proof}
Since $H$ is normal and $\gr_C^1\OOO\simeq \OOO_{\PP^1}\oplus \OOO_{\PP^1}(-1)$,
we have $\OOO_C(-H)=\OOO\subset \gr_C^1\OOO$, i.e. the local equation of $H$
must be a generator of $\OOO\subset \gr_C^1\OOO$.
\end{proof}

\begin{de}
Let $\sigma$ be the weight $\frac14(3,2,1,1)$. By Lemma \ref{II-B-lemma-equation-HH}
the surface germ $(H,P)$ can be given in $\CC^4/\muu_4(3,2,1,1)$ by 
two equations:
\begin{equation}
\label{equation-IIB-H}
\begin{cases}
 y_1^2-y_2^3+\eta(y_3,y_4)+ \phi(y_1,y_2,y_3,y_4)=0,
\\[4pt]
y_1l(y_3,y_4)+y_2q(y_3,y_4)+\xi(y_3,y_4)+\psi(y_1,y_2,y_3,y_4) =0,
\end{cases}
\end{equation}
where $\eta$, $l$, $q$ and $\xi$ are homogeneous polynomials 
of degree $2$, $1$, $2$ and $4$, respectively, $\eta\neq 0$, $l\neq 0$,
$\phi, \, \psi \in (y_3,\, y_4)$, $\sigma\mbox{-}\ord \phi\ge 3/2$, $\sigma\mbox{-}\ord \psi\ge 2$.
Moreover, $\rk \eta=2$
(resp. $\rk \eta=1$) if $(X,P)$ is a simple (resp. double) \type{cAx/4}-point.

\begin{sde}
Consider the weighted blowup 
\[
g: (W\supset \tilde X\supset \tilde H) \longrightarrow (\CC^4/\muu_4(3,2,1,1)\supset X\supset H)
\]
with weight $\sigma$.
Let $E$ be the $g$-exceptional divisor, let $\Xi:=E\cap \tilde H$
be the exceptional divisor of $g_H:=g|_{\tilde H}$,
and let $\tilde C$ be the proper transform of $C$.
Denote
\[
\Xi_0:=\{y_3=y_4=0\}\subset E. 
\]
If $\tilde H$ is normal, let $g_1:\hat H\to \tilde H$ be the minimal resolution. 
Thus, in this case, we have the following morphisms
\[
h: \hat H \overset{g_1}\longrightarrow \tilde H \overset{g_H}\longrightarrow H 
\overset{f_H}\longrightarrow T.
\]

\begin{sthm}{\bf Lemma.}\label{IIB-lemma-computations-self-intersections-1}
\begin{enumerate}
 \item 
$E\simeq \PP(3,2,1,1)$ and $\Xi$ is given in this $\PP(3,2,1,1)$ by
\[
\eta(y_3,y_4)=y_1l(y_3,y_4)+y_2q(y_3,y_4)+\xi(y_3,y_4)=0;
\]
 \item 
$\tilde C$ of $C$ meets $E$ at $Q:=(1:1:0:0)\in \Xi_0$;
 \item \label{IIB-computations-self-intersections-1}
$\Xi_0$ is a component of $\Xi$ and 
$(\Xi_0\cdot \Xi)_{\tilde H}=-2/3$;
 \item \label{equation-IIB-discrepansies}
If $\tilde H$ is normal, then 
$K_{\tilde H}=g^*K_H-\frac{3}{4}\Xi$.
\end{enumerate}
\end{sthm}
\begin{proof}
Statements (i) and (ii) are obvious, (iii) follows from 
\begin{equation*}
\qquad (\Xi_0\cdot \Xi)_{\tilde H}=
(\Xi_0\cdot E)_{W}
=(\Xi_0\cdot \OOO_E(E))_E = 
(\Xi_0\cdot \OOO_E(-4))_E=
-\frac23,
\end{equation*}
and (iv) follows from 
$K_W=g^*K_{\CC^4/\muu_4}+ \frac 34E$. 
\end{proof}
\end{sde}
\end{de}

\begin{de}{\bf Case of simple \type{cAx/4}-point.}
After a coordinate change we may assume that $\eta=y_3y_4$. 
We also may assume that 
the term $y_3$ appears in $l(y_3,y_4)$ with coefficient $1$, that is,
$l(y_3,y_4)=y_3+cy_4$, $c\in \CC$.
Thus the equations \eqref{equation-IIB-H} for $(H,P)$ have the form:
\begin{equation}
\label{equation-IIB-rank=2}
\begin{cases}
 y_1^2-y_2^3+ y_3y_4+\phi=0,
\\[4pt]
y_1(y_3+cy_4)+y_2q(y_3,y_4)+\xi(y_3,y_4)+\psi =0.
\end{cases}
\end{equation}
It is easy to see that in this case $\tilde X$ has only isolated (terminal) singularities.
Indeed, $\tilde X\cap E$ is given by $y_3y_4=0$ in $E\simeq\PP(3,2,1,1)$.
Hence, $\Sing (\tilde X)\subset \Xi_0\cup \Sing(E)$.
There are the following subcases.

\begin{sde}
{\bf Subcase: $(X,P)$ is simple \type{cAx/4}-point and $c\neq 0$.}\label{II-B-case-most-general}
We shall show that only the case \ref{IIB-theorem-A2case-simple} occurs.
We may assume that in \eqref{equation-IIB-rank=2} $l(y_3,y_4)=y_3+y_4$.
In this case, $\Xi=2\Xi_0+\Xi'+\Xi''$, where $\Xi'$ and $\Xi''$ 
are given in $E\simeq \PP(3,2,1,1)$ as follows:
\[
\begin{array}{ll}
\Xi':=&\{y_3=y_1+y_2q(0,y_4)/y_4+\xi(0,y_4)/y_4=0\},
\\[4pt]
\Xi'':=&\{y_4=y_1+y_2q(y_3,0)/y_3+\xi(y_3,0)/y_3=0\}.
\end{array}
\] 
All the components of $\Xi$ pass through $(0:1:0:0)$ and do not meet each other 
elsewhere.

\begin{ssthm}{\bf Claim.}\label{IIB-claim-1}
The surface $\tilde H$ is normal and has the following singularities
\textup(in natural weighted coordinates on $E\simeq \PP(3,2,1,1)$\textup)\textup:
\begin{itemize}
\item 
$O_1:=(1:0:0:0)$ which is of type \type{A_2},
\item 
$Q:=\Xi_0\cap \tilde C=(1:1:0:0)$ which is of type \type{A_1},
\item 
$O_2:=\Xi_0\cap \Xi'\cap \Xi''=(0:1:0:0)$ which is a log terminal point of index $2$
\textup(a cyclic quotient singularity of type $\frac{1}{4k}(1,2k-1)$\textup).
\end{itemize}
Pairs $(\tilde H,\Xi_0+\Xi'+\tilde C)$ and $(\tilde H,\Xi_0+\Xi''+\tilde C)$ are 
log canonical \textup(LC\textup).
Moreover, they are purely log terminal \textup(PLT\textup) at all points of 
$\Xi_0\setminus \{O_2,\, Q\}$. 
Thus the surface $\tilde H$ looks as follows\textup: 
\[
\xy
(48,0)="A" *{}*+!DR{\tilde C},
(48,-10)="B" *{}*+!DR{},
(5,-10)="C" *{}*+!DR{},
(5,22)="D" *{}*+!DR{\Xi'},
(3,-10)="E" *{}*+!DL{},
(65,-10)="F" *{}*+!DL{\Xi_0},
(25,-10)="EE" *{\bullet}*+!DL{\mathrm A_2},
(7,-8)="X" *{}*+!DL{},
(35,20)="Y" *{}*+!DR{\Xi''},
(25,15)="TT" *{}*+!DL{},
(48,-10)="FF" *{\bullet}*+!DL{\mathrm A_1},
{"E";"F":"C";"D",x} ="I" *{\bullet}*+!UR{O_2},
"B";"A"**{} +/1pc/;-/1pc/ **@{-},
"C";"D"**{} +/1pc/;-/1pc/ **@{-},
"E";"F"**{} +/1pc/;-/1pc/ **@{-},
"X";"Y"**{} +/1pc/;-/1pc/ **@{-}
\endxy
\]
\end{ssthm}

\begin{proof}
Since $\Xi=\tilde H\cap E$ is reduced along $\Xi'$ and $\Xi''$,
the singular locus of $\tilde H$ is contained in $\Xi_0=\{y_3=y_4=0\}$.

Consider the chart $U_1=\{y_1\neq 0\}\subset W$,\ $U_1\simeq \CC^4/\muu_3(2,2,1,1)$. 
The equations of $\tilde H$ have the form 
\[
\begin{cases}
 y_1-y_1y_2^3+ y_3y_4+y_1\phi_{3/2}(1,y_2,y_3,y_4)+y_1^2(\cdots)=0,
\\[4pt]
y_3+y_4+y_2q(y_3,y_4)+\xi(y_3,y_4)+y_1\psi_{2}(1,y_2,y_3,y_4)+y_1^2(\cdots)=0.
\end{cases}
\]
and $\tilde C$ is cut out on $\tilde H$ by $y_3=y_4=0$.
Using the condition $y_1=y_3=y_4=0$ one can obtain that 
the surface $\tilde H\cap U_1$ 
has on the exceptional divisor $\{y_1=0\}$ two singular points: 
$Q=\{y_1=y_3=y_4=1-y_2^3=0\}$ and the origin $O_1$.
It is easy to see that $(\tilde H,\,Q)$ is a Du Val singularity of type \type{A_1}
and $(\tilde H,\,O_1)$ is a Du Val singularity of type \type{A_2}.
Since $\Xi_0$ and $\tilde C$ are 
smooth curves meeting each other transversely, the pair $K_{\tilde H}+\Xi_0+\tilde C$
is LC at $Q$.

Consider the chart $U_{2}=\{y_2\neq 0\}\subset W$, $U_2\simeq \CC^4/\muu_2(1,0,1,1)$. 
The equations of $\tilde H$ have the form 
\[
\begin{cases}
 y_1^2y_2-y_2+ y_3y_4+y_2\phi_{3/2}(y_1,1,y_3,y_4)+y_2^2(\cdots)=0,
\\[4pt]
y_1(y_3+y_4)+q(y_3,y_4)+\xi(y_3,y_4)+y_2\psi_{2}(y_1,1,y_3,y_4)+y_2^2(\cdots)=0.
\end{cases}
\]
Then we get only one new singular point: the origin $O_2$ where 
the singularity of $\tilde H$ is analytically isomorphic to a singularity in 
$\CC^3_{y_1,y_3,y_4}/\muu_{2}(1,1,1)$ given by
\begin{equation}
\label{equation-recent}
\{y_1(y_3+y_4)+q(y_3,y_4)+\text{(terms of degree $\ge 3$)}=0\}.
\end{equation}
Hence, $(\tilde H,O_2)$ is a log terminal singularity of index $2$.
\end{proof}

Therefore, for the graph $\Delta(\hat H, \Gamma+\hat C)$ we have only the following 
two possibilities:
\[
\begin{array}{c@{}c@{}c@{}c@{}c@{}c@{}c@{}c@{}c}
\overset{a'}{\circ}&\lin&\overset{4}{\circ}&\lin&\overset{a_0}{\circ}&\lin&\circ&\lin&\circ
\\[-5pt]
&&|&&|
\\[-5pt]
&&\underset{a''}{\circ}&&\circ&\lin&\bullet
\end{array}
\qquad
\begin{array}{c@{}c@{}c@{}c@{}c@{}c@{}c@{}c@{}c@{}c@{}c@{}c@{}c@{}c@{}c}
\overset{a'}{\circ}&\lin&\overset{3}{\circ}&\lin&\circ&\lin\cdots\lin&\circ&\lin&\overset{3}{\circ}&\lin&\overset{a_0}{\circ}&\lin&\circ&\lin&\circ
\\[-5pt]
&&|&&&&&&&&|
\\[-5pt]
&&\underset{a''}{\circ}&&&&&&&&\circ&\lin&\bullet
\end{array}
\]
where the vertex marked by $a_0$ (resp. $a'$, $a''$)
corresponds to $\Xi_0$ (resp. $\Xi'$, $\Xi''$)
and $\bullet$ corresponds to $\hat C$. 

Using Lemma \ref{IIB-lemma-computations-self-intersections-1},
\ref{IIB-computations-self-intersections-1} 
one can easily obtain that $a_0=2$.
Similarly, 
\begin{equation*}
\label{IIB-computations-self-intersections-2}
(\Xi'\cdot \Xi)_{\tilde H}=(\Xi''\cdot \Xi)_{\tilde H}=-2.
\end{equation*}
This gives us $a'=a''=3$. 
However
the second of the above configurations is not contractible. 
We get the case \ref{IIB-theorem-A2case-simple}.
\end{sde}

\begin{sthm}{\bf Corollary.}\label{lemma-IIB-q4ne0}
$q(0,y_4)\neq 0$.
\end{sthm}
\begin{proof}
Assume that $q(0,y_4)= 0$. 
Take $H$ so that, in \eqref{equation-IIB-H},
functions $\eta$, $\phi$, $l$, $q$, $\xi$, and $\psi$ are sufficiently general 
under this assumption. 
Let $X'$ be a general one-parameter deformation family of $H$.
According to \cite[Prop. 11.4]{Kollar-Mori-1992} 
there is a contraction $f':X'\to Z'$, so $(X',C)$ is an extremal curve germ. 
Moreover, $(X',C')$ is of type \type{IIB}.
By \ref{II-B-case-most-general} we get a contradiction
(otherwise \eqref{equation-recent} is not a point of type $\frac14(1,1)$). 
\end{proof}

\begin{sde}{\bf Subcase: $(X,P)$ is simple \type{cAx/4}-point and $c=0$.}
We shall show that only the case \ref{IIB-theorem-smooth-case-simple} occurs.
Equations \eqref{equation-IIB-rank=2} have the form
\begin{equation*}
\begin{cases}
 y_1^2-y_2^3+ y_3y_4+\phi=0,
\\[4pt]
y_1y_3+y_2q(y_3,y_4)+\xi(y_3,y_4)+\psi =0.
\end{cases}
\end{equation*}
In this case, $\Xi=3\Xi_0+\Xi'+\Xi''$, where $\Xi'$ and $\Xi''$
are given in $E\simeq \PP(3,2,1,1)$ as follows:
\[
\begin{array}{l}
\Xi'=\{y_4=y_1+y_2q(y_3,0)/y_3+\xi(y_3,0)/y_3=0\},
\\[5pt]
\Xi''=\{y_3=y_2q(0,y_4)/y_4^2+\xi(0,y_4)/y_4^2=0\}. 
\end{array}
\]

\begin{ssthm}{\bf Claim.}
The surface $\tilde H$ is normal and has the following singularities
\textup(in natural weighted coordinates on $E\simeq \PP(3,2,1,1)$\textup)\textup:
\begin{itemize}
\item 
$O_1:=\Xi_0\cap \Xi''=(1:0:0:0)$ which is of type \type{A_2},
\item 
$Q:=\Xi_0\cap \tilde C=(1:1:0:0)$ which is of type \type{A_2},
\item 
$O_2:=\Xi_0\cap \Xi'=(0:1:0:0)$ which is of type 
$\frac{1}{4}(1,1)$.
\end{itemize}
The pair $(\tilde H,\Xi_0+\Xi'+\Xi''+\tilde C)$ is LC.
Thus $\tilde H$ looks as follows\textup: 
\[
\xy
(55,0)="A" *{}*+!DR{\tilde C},
(55,-10)="B" *{}*+!DR{},
(55,-10)="FF" *{\bullet}*+!DL{\mathrm A_2},
(5,-10)="C" *{}*+!DR{},
(5,22)="D" *{}*+!DR{\Xi'},
(3,-10)="E" *{}*+!DL{},
(65,-10)="F" *{}*+!DL{\Xi_0},
(35,-10)="X" *{\bullet}*+!DL{\mathrm A_2},
(35,20)="Y" *{}*+!DR{\Xi''},
(25,15)="TT" *{}*+!DL{},
{"E";"F":"C";"D",x} ="I" *{\bullet}*+!UR{\frac14(1,1)},
"B";"A"**{} +/1pc/;-/1pc/ **@{-},
"C";"D"**{} +/1pc/;-/1pc/ **@{-},
"E";"F"**{} +/1pc/;-/1pc/ **@{-},
"X";"Y"**{} +/1pc/;-/1pc/ **@{-}
\endxy
\]
\end{ssthm}
The proof is similar to the proof of Claim \ref{IIB-claim-1}, so we omit it.

By the above claim $\Delta(H,C)$ has the form
\[
\begin{array}{c@{}c@{}c@{}c@{}c@{}c@{}c@{}c@{}c@{}c@{}c@{}c@{}c@{}}
\overset{a''}{\circ}&\lin&\circ&\lin&\circ&\lin&\overset{a_0}\circ&\lin&\circ&\lin&\circ&\lin&\bullet 
\\[-3pt]
&&&&&&|
\\[-3pt]
&&&&\underset{a'}\circ&\lin&\underset4\circ
\end{array}
\]
Since 
\begin{equation*}
\label{IIB-computations-self-intersections-3}
(\Xi'\cdot \Xi)_{\tilde H}=-2,\quad 
(\Xi''\cdot \Xi)_{\tilde H}=-\frac43.
\end{equation*}
(cf. Lemma \ref{IIB-lemma-computations-self-intersections-1},
\ref{IIB-computations-self-intersections-1}), we have 
$a_0=2$ and $a'=a''=3$. Thus we get the case \ref{IIB-theorem-smooth-case-simple}.
\end{sde}
\end{de}

\begin{de} 
{\bf Case of double \type{cAx/4}-point.}
We may assume that $\eta=y_3^2$. 
By Corollary \ref{lemma-IIB-q4ne0} $q(0,y_4)\neq 0$, so we also may assume that
$q(0,y_4)=y_4^2$.
Thus the equations \eqref{equation-IIB-H} for $(H,P)$ have the form:
\begin{equation*}
\label{equation-IIB-rank=1}
\begin{cases}
 y_1^2-y_2^3+ y_3^2+\phi=0,
\\[4pt]
y_1l(y_3,\, y_4)+y_2q(y_3,y_4)+\xi(y_3,y_4)+\psi =0.
\end{cases}
\end{equation*}
where $\phi$ does not contain any terms of degree $\le 2$.
This case is more complicated because $\tilde X$ has 
non-isolated singularities:

\begin{sde} {\bf Remark.}
$\Sing(\tilde X)$ has exactly one one-dimensional 
irreducible component 
\[
\Lambda:=\{y_3=y_1^2-y_2^3+\phi_{\sigma=3/2}(y_1,y_2,0,y_4)=0\}\subset E\simeq \PP(3,2,1,1).
\]
\end{sde}

There are the following subcases.

\begin{sde}{\bf Subcase: $(X,P)$ is double \type{cAx/4}-point and $l(0,\, y_4)\neq 0$.}
We shall show that only the case \ref{IIB-theorem-D4case-double} occurs.
After a coordinate change, we may assume that $l(y_3,\, y_4)=y_4$, so 
the equations \eqref{equation-IIB-H} for $(H,P)$ have the form:
\begin{equation}
\label{equation-IIB-H-double}
\begin{cases}
 y_1^2-y_2^3+ y_3^2+\phi=0,
\\[4pt]
y_1y_4+y_2q(y_3,y_4)+\xi(y_3,y_4)+\psi =0.
\end{cases}
\end{equation}
In this case, $\Xi=2\Xi_0+2\Xi'$, where 
\[
\Xi'=\{y_3=y_1+y_2q(0,y_4)/y_4+\xi(0,y_4)/y_4=0\}\subset E\simeq \PP(3,2,1,1).
\] 
\begin{ssthm}{\bf Claim.}
The surface $\tilde H$ is normal and has the following singularities on $\Xi_0$
\textup(in natural weighted coordinates on $E\simeq \PP(3,2,1,1)$\textup)\textup:
\begin{itemize}
\item 
$O_1:=(1:0:0:0)$ which is of type \type{A_2},
\item 
$Q:=\Xi_0\cap \tilde C=(1:1:0:0)$ which is of type \type{A_1},
\item 
$O_2:=\Xi_0\cap \Xi'=(0:1:0:0)$ which is a log terminal point of index $2$.
\end{itemize}
The pair $(\tilde H,\Xi_0+\Xi'+\tilde C)$ is LC along $\Xi_0$.
Moreover, it is PLT at all points of 
$\Xi_0\setminus \{O_2,\, Q\}$. 
Thus $\tilde H$ looks as follows\textup: 
\[
\xy
(25,0)="A" *{}*+!DR{\tilde C},
(25,-10)="B" *{}*+!DR{},
(5,-10)="C" *{}*+!DR{},
(5,22)="D" *{}*+!DR{\Xi'},
(3,-10)="E" *{}*+!DL{},
(65,-10)="F" *{}*+!DL{\Xi_0},
(50,-10)="EE" *{\bullet}*+!DL{\mathrm A_2},
(7,-8)="X" *{}*+!DL{},
(25,15)="TT" *{}*+!DL{},
(25,-10)="FF" *{\bullet}*+!DL{\mathrm A_1},
{"E";"F":"C";"D",x} ="I" *{\bullet}*+!UR{O_2},
"B";"A"**{} +/1pc/;-/1pc/ **@{-},
(5,0)="D1" *{}*+!DR{},
(5,8)="G" *{\vdots}*+!UR{},
(5,15)="D2" *{}*+!DR{},
"C";"D1"**{} +/1pc/;-/1pc/ **@{-},
"D2";"D"**{} +/1pc/;-/1pc/ **@{-},
"E";"F"**{} +/1pc/;-/1pc/ **@{-},
\endxy
\]
where there are more singular points sitting on $\Xi'\setminus \{O_2\}$ 
which must be Du Val. 
\end{ssthm}

\begin{ssde}{\bf Remark.}
For general choice of $\xi$ and $\phi$ the surface $\tilde H$ has 
exactly three singular points 
on $\Xi'\setminus \{O_2\}$ and these points are of type \type{A_1}.
 
\end{ssde}

Hence the dual graph $\Delta(H, C)$ has one of the following forms:
\[
\begin{array}{c@\,c@{}c@{}c@{}c@{}c@{}c@{}c@{}c@{}c@{}c@{}c}
\vdots&\lin&\overset{a'}{\circ}&\lin&\overset{4}{\circ}&\lin&\overset{a_0}\circ&\lin&\circ&\lin&\circ&
\\[-4pt]
&&&&&&|
\\[-4pt]
&&&&&&\circ&\lin&\bullet
\end{array}\leqno{\mathrm a)}
\]
\[
\begin{array}{c@\,c@{}c@{}c@{}c@{}c@{}c@{}c@{}c@{}c@{}c@{}c@{}c@{}c@{}c@{}c@{}c@{}c}
\vdots&\lin&\overset{a'}{\circ}&\lin&
\overset{3}{\circ}&\lin&\circ&\lin\cdots\lin&\circ&\lin&\overset{3}{\circ}
\lin&\overset{a_0}\circ&\lin&\circ&\lin&\circ&
\\[-4pt]
&&&&&&&&&&&|
\\[-4pt]
&&&&&&&&&&&\circ&\lin&\bullet
\end{array}\leqno{\mathrm b)}
\]
where $\vdots$ corresponds to some Du Val singularities sitting on $\Xi'$.
Since the whole configuration is contractible to either a Du Val point or a curve,
we have $a_0=2$ and the case b) does not occur.
In the case a), contracting black vertices successively we get
\[
\begin{array}{c@{\hspace{2pt}}c@{}c}
\vdots&\lin&\overset{a'-1}{\circ}
\end{array}
\]
Hence $a'=2$ or $3$. 

\begin{ssde}{}\label{codiscrepancies-IIB}
Let $(S,o)$ be a normal surface singularity and let $\mu: \hat S\to S$
be its resolution. Recall that the \textit{codiscrepancy divisor } is a unique
$\QQ$-divisor $\Theta=\sum \theta_i\Theta_i$ on $\hat S$ with support in the exceptional locus such that 
$\mu^*K_S=K_{\hat S}+\Theta$. 
If $\mu$ is the minimal resolution, then $\Theta$ must be effective.
The coefficient $\theta_i$ is called the \textit{codiscrepancy} of 
$\Theta_i$. We denote it by $\cd(\Theta_i)$. If $(S,o)$ is a rational singularity, then 
$\theta_i=\cd(\Theta_i)$ can be found from the following system of linear equations:
\[
\sum_i \theta_i\Theta_i\cdot \Theta_j=-K_{\hat S}\cdot \Theta_j=2+\Theta_j^2.
\]
Let $a_i:=-\Theta_i^2$. Then the system can be rewritten as follows:
\[
a_j\theta_j= -\Theta_j^2-2+\sum\nolimits' \theta_i
\]
where $\sum'$ runs through all exceptional curves $\Theta_i$ meeting $\Theta_j$.
\end{ssde}
\begin{ssthm}{\bf Corollary.}\label{discrepancies-chain-Dn}
Let $\Delta$ be the dual graph of a resolution of a rational singularity
and let $\Delta'$ be its subgraph consisting of one vertex of weight $a\ge 2$
and $n-1$ vertices of weight $2$. Assume that the remaining part $\Delta\setminus \Delta'$ 
is attached to $\overset a\circ$.
\begin{enumerate}
 \item 
If $\Delta'$ has the form
\[
\begin{array}{c@{}c@\,c@\,c@{}c@{}c}
\circ&\lin&\cdots&\lin&\circ&\lin\overset a\circ\cdots 
\end{array} 
\]
then the codiscrepancies of the corresponding to $\Delta'$ exceptional components,
indexed from the left to right, 
are computed by $\alpha_k=k\alpha_1$, $k\le n$. 
 \item 
If $\Delta'$ has the form
\[
\begin{array}{c@{}c@{}c@{}c@\,c@\,c@{}c@{}c}
\circ&\lin&\circ&\lin&\cdots&\lin&\circ&\lin\overset a\circ\cdots 
\\[-5pt]
&&|
\\[-5pt]
&&\circ
\end{array} 
\]
then the codiscrepancies of the corresponding to $\Delta'$ exceptional components
are computed by $\alpha_1=\alpha_2=2\alpha_3$ and $\alpha_k=\alpha_3$ for $3\le k\le n$. 
\end{enumerate}
\end{ssthm}
\begin{ssde}
By Lemma \ref{IIB-lemma-computations-self-intersections-1},
\ref{equation-IIB-discrepansies} we have
$\cd(\Xi_0)=\cd(\Xi')=3/2$. 
Using \ref{codiscrepancies-IIB} we compute the codiscrepancies of exceptional divisors 
over $\tilde H$:
\[
\begin{array}{c@\,c@{}c@{}c@{}c@{}c@{}c@{}c@{}c@{}c@{}c@{}c}
\vdots&\lin&\overset{3/2}{\circ}&\lin&\overset{5/4}{\circ}&\lin&
\overset{3/2}\circ&\lin&\overset1\circ&\lin&\overset{1/2}\circ&
\\[-3pt]
&&&&&&|
\\[-3pt]
&&&&&&\underset{3/4}{\circ}&\lin&\bullet
\end{array}
\]
\end{ssde}
\begin{ssde}{}\label{IIB-double-subcase-a_22}
If $a'=2$, then the configuration $\vdots\lin\overset{a'-1}{\circ}$ is contracted either to 
a smooth point or to a curve. Therefore we have one of the following possibilities:
\[
\begin{array}{c@\,c@\,c@\,c@\,c@\,c@\,c@\,c@\,c@\,c@\,c@\,c@\,c@\,c@\,c@\,c}
\overset{\alpha_1}{\circ}&\lin&\cdots&\lin&\overset{\alpha_n}{\circ}&\lin&
\overset{3/2}{\circ}&\lin&\overset{5/4}{\circ}&\lin&
\overset{3/2}{\circ}&\lin&\overset1{\circ}&\lin&\overset{1/2}{\circ}&
\\[-3pt]
&&&&&&&&&&|
\\[-3pt]
&&&&&&&&&&\underset{3/4}{\circ}&\lin&\bullet
\end{array}\leqno{\mathrm a1)}
\]

\[
\begin{array}{c@\,c@\,c@\,c@\,c@\,c@\,c@\,c@\,c@\,c@\,c@\,c@\,c@\,c@\,c@\,c@\,c@\,c}
\overset{\alpha_1}{\circ}&\lin&\overset{\alpha_3}{\circ}&\lin&\cdots&\lin&\overset{\alpha_n}{\circ}&\lin&
\overset{3/2}{\circ}&\lin&\overset{5/4}{\circ}&\lin&
\overset{3/2}{\circ}&\lin&\overset1{\circ}&\lin&\overset{1/2}{\circ}&
\\[-3pt]
&&|&&&&&&&&&&|
\\[-3pt]
&&\underset{\alpha_2}\circ&&&&&&&&&&\underset{3/4}{\circ}&\lin&\bullet
\end{array}\qquad n\ge 2\leqno{\mathrm a2)}
\]
Then we get a contradiction by Corollary \ref{discrepancies-chain-Dn}.
\end{ssde}

\begin{ssde}{}\label{IIB-double-subcase-a_23}
Thus, $a'=3$. 
Then $f$ is divisorial and the configuration $\vdots\lin\overset{a'-1}{\circ}$ 
is exactly the dual graph of the minimal resolution of $(T,o)$ which is a Du Val 
graph of type \type{E_6},
\type{D_5}, \type{D_4}, \type{A_4}, \type{A_3}, \type{A_2} or \type{A_1}.
If the graph $\Delta(H,C)$ has the form a1), then, 
as above, $3/2=\alpha_{n+1}=(n+1)\alpha_1$, $3\cdot 3/2=1+\alpha_n+5/4$.
This gives us $n\alpha_1=9/4$, $\alpha_1=3/2-9/4<0$, a contradiction.
Similarly, in the case a2) with $n\ge 3$ we obtain $\alpha_n=3/2$, $3\cdot 3/2=1+\alpha_n+5/4$,
a contradiction.

If there are three connected components of the exceptional divisor attached to $\Xi'$,
then for corresponding codiscrepancies $\alpha_n$, $\beta_m$, $\gamma_l$ we have
$3\cdot 3/2=1+\alpha_n+\beta_m+\gamma_l+5/4$, $\alpha_n+\beta_m+\gamma_l=9/4$.
On the other hand, $2\alpha_n\ge 3/2$, $2\beta_m\ge 3/2$, $2\gamma_l\ge 3/2$.
Hence the equalities $\alpha_n=\beta_m=\gamma_l=3/4$ hold and we 
the get case \ref{IIB-theorem-D4case-double}.

In the remaining cases, by direct computations we obtain that the exceptional divisors 
have codiscrepancies whose denominators divide $4$ 
only in cases \ref{IIB-theorem-D5case-double} or \ref{IIB-theorem-E6case-double} below.
\end{ssde}

\begin{ssde}\label{IIB-theorem-D5case-double}
$(T,o)$ is Du Val of type \type{D_5} and $\Delta(H,C)$ has the form
\[
\begin{array}{c@\,c@\,c@\,c@\,c@\,c@\,c@\,c@\, c@\, c@\,c@\,c@\,c}
\circ&\lin&\circ&\lin&\overset{\Xi_0}{\circ}&
\lin&\overset4\circ&\lin&\overset3{\circ}&\lin&\circ&\lin&\circ
\\[-5pt]
&&&&|&&&&|&&|
\\[-5pt]
&&\bullet&\lin&\circ&&&&\underset{}{\circ}&&\circ
\end{array}
\]
here $\tilde H$ has two singular points on $\Xi'\setminus \Xi_0$ and these points are 
of types \type{A_1} and \type{A_3}.
\end{ssde}

\begin{ssde}\label{IIB-theorem-E6case-double}
$(T,o)$ is Du Val of type \type{E_6} and $\Delta(H,C)$ has the form
\[
\begin{array}{c@\,c@\,c@\,c@\,c@\,c@\,c@\,c@\, c@\, c@\,c@\,c@\,c@\,c@\,c@\,}
\circ&\lin&\circ&\lin&\overset{\Xi_0}{\circ}&\lin&\overset4\circ&\lin&\overset3{\underset{\Xi'}{\circ}}
&\lin&\circ&\lin&\circ&\lin&\circ
\\[-9pt]
&&&&|&&&&&&|
\\[-5pt]
&&\bullet&\lin&\circ&&&&&&\circ&\lin&\circ
\end{array}
\]
here $\tilde H$ has exactly one singular points on $\Xi'\setminus \Xi_0$ and this point is
of type \type{A_5}.
\end{ssde}
\end{sde}

\begin{sde}\label{general-extension}
Now we show that in cases \ref{IIB-theorem-D5case-double} and \ref{IIB-theorem-E6case-double}
the chosen element $H\in |\OOO_X|_C$ is not general. 
Consider the case \ref{IIB-theorem-D5case-double}. Case \ref{IIB-theorem-E6case-double}
can be treated similarly. Take a divisor $D$ on $\hat H$ whose coefficients are 
as follows:
\[
\begin{array}{c@\,c@\,c@\,c@\,c@\,c@\,c@\,c@\, c@\, c@\,c@\,c@\,c}
&&&&&&&&\overset 1\Box
\\[-4pt]
&&&&&&&&|
\\[-1pt]
\overset2\circ&\lin&\overset4\circ&\lin&\overset6\circ&\lin&\overset2\circ&\lin&
\overset2\circ&\lin&\overset2\circ&\lin&\overset1\circ
\\[-5pt]
&&&&|&&&&|&&|
\\[-5pt]
&&\underset6\bullet&\lin&\underset6\circ&&&&\underset{1}{\circ}&&\underset1\circ
\end{array}
\]
where $\Box$ corresponds to 
an arbitrary smooth analytic curve $\hat G$ meeting $\Xi'$ transversely
so that $\Supp D$ is a simple normal crossing divisor.
It is easy to verify that $D$ is numerically trivial, so $D=h^* G_Z$, where $G_Z$ is a
Cartier divisor on $T$.
Since $R^1f_*\OOO_X=0$, by the exact sequence
\[
0 \longrightarrow \OOO_{X} \longrightarrow\OOO_{X}(H')\longrightarrow
\OOO_H(H')\longrightarrow 0
\]
we get surjectivity of the map $H^0(X,\OOO_{X}(H'))\to H^0(H,\OOO_{H}(H'))$.
Thus there is a member $H'\in |\OOO_X|_C$ such that $H'|_H=f_H^*G_Z$.

The proper transform $\tilde H'$ of $H'$ by $g$ 
satisfies $\tilde H'=g^*H' -E|_{\tilde X}$.
Since $\Xi=E \cap \tilde{H}$ and $\Xi=2 \Xi_0 + 2 \Xi'$, 
we have $\tilde H'|_{\tilde{H}} = 4 \Xi_0 +g_1(\hat G)$.
In particular, $\Xi'$ is not a component of $\tilde H'|_{\tilde{H}}$.
Note that $|g_1(\hat G)|$ is a base point free linear system on $\tilde{H}$
(because $H^1(\OOO_{\tilde{H}})=0$). 
Thus we can take $H'$ so that $\tilde H'$ does not pass 
through points in $\tilde H\cap \Lambda\setminus \Xi_0$.
Now let $H_\epsilon$ be a general member of the pencil generated by 
$H$ and $H'$. 
Note that $\Lambda\cap \Xi_0=\{Q\}$
and $\Lambda$ meets $\tilde H$ and $\tilde H_\epsilon$ transversely at $Q$.
By Bertini's theorem the proper transform $\tilde H_\epsilon$ of $H_\epsilon$
on $\tilde X$ meets $\Lambda$ transversely also along $\Xi'$. 
Since $(\tilde H_\epsilon\cdot \Lambda)_{\tilde X}= (\OOO(4)\cdot \Lambda)_{\PP(3,2,1,1)}=4$,
the intersection $\tilde H_\epsilon\cap \Lambda$ consists of four 
distinct points. Therefore, $\tilde H_\epsilon$ has three Du Val points on 
$\tilde H_\epsilon\cap \Lambda\setminus \Xi_0$. This shows that 
for $H_\epsilon$ the situation of \ref{IIB-theorem-D4case-double} holds, so the chosen $H$ is not general
in the case \ref{IIB-theorem-D5case-double}.
\end{sde}

\begin{sde}{\bf Subcase: $(X,P)$ is double \type{cAx/4}-point and $l(0,\, y_4)= 0$.}
We shall show that only the case \ref{IIB-theorem-conic-bundle-case-double} occurs.
We may assume that $l(y_3,\, y_4)=y_3$, so the equations 
\eqref{equation-IIB-H} for $(H,P)$ have the form:
\begin{equation}
\label{equation-IIB-H-double-0}
\begin{cases}
 y_1^2-y_2^3+ y_3^2+\phi=0,
\\[4pt]
y_1y_3+y_2q(y_3,y_4)+\xi(y_3,y_4)+\psi =0.
\end{cases}
\end{equation}
In this case, $\Xi=4\Xi_0+2\Xi'$, where 
\[
\Xi'=\{y_3=y_2q(0,y_4)/y_4^2+\xi(0,y_4)/y_4^2=0\}\subset E\simeq \PP(3,2,1,1).
\] 

\begin{ssthm}{\bf Claim.}
The surface $\tilde H$ is normal and has the following singularities on $\Xi_0$
\textup(in natural weighted coordinates on $E\simeq \PP(3,2,1,1)$\textup)\textup:
\begin{itemize}
\item 
$O_1:=\Xi_0\cap \Xi'=(1:0:0:0)$ which is of type \type{A_2},
\item 
$Q:=\Xi_0\cap \tilde C=(1:1:0:0)$ which is of type \type{A_3},
\item 
$O_2:=(0:1:0:0)$ which is a cyclic quotient singularity of type $\frac14(1,1)$.
\end{itemize}
The pair $(\tilde H,\Xi_0+\Xi'+\tilde C)$ is LC along $\Xi_0$.
Moreover, it is PLT at all points of 
$\Xi_0\setminus \{O_1,\, Q\}$. 
Thus $\tilde H$ looks as follows\textup: 
\[
\xy
(48,0)="A" *{}*+!DR{\tilde C},
(48,-10)="B" *{}*+!DR{},
(5,-10)="C" *{}*+!DR{},
(5,22)="D" *{}*+!DR{\Xi'},
(3,-10)="E" *{}*+!DL{},
(65,-10)="F" *{}*+!DL{\Xi_0},
(25,-10)="EE" *{\bullet}*+!DL{\frac14(1,1)},
(7,-8)="X" *{}*+!DL{},
(25,15)="TT" *{}*+!DL{},
(48,-10)="FF" *{\bullet}*+!DL{\mathrm A_3},
{"E";"F":"C";"D",x} ="I" *{\bullet}*+!UR{\mathrm A_2},
(5,0)="D1" *{}*+!DR{},
(5,8)="G" *{\vdots}*+!UR{},
(5,15)="D2" *{}*+!DR{},
"B";"A"**{} +/1pc/;-/1pc/ **@{-},
"C";"D1"**{} +/1pc/;-/1pc/ **@{-},
"D2";"D"**{} +/1pc/;-/1pc/ **@{-},
"E";"F"**{} +/1pc/;-/1pc/ **@{-},
\endxy
\]
\end{ssthm}

Hence the dual graph $\Delta(H, C)$ has the following form:
\[
\begin{array}{c@\,c@\,c@\,c@\,c@\,c@\,c@\,c@\,c@\,c@\,c@\,c@\,c@\,c@\,c@\,c@\,c@\,}
\vdots&\lin&\overset{a'}{\circ}&\lin&\circ &\lin&\circ&\lin&\overset{a_0}{\circ}
&\lin&\circ&\lin&\circ&\lin&\circ&\lin&\bullet
\\[-4pt]
&&&&&&&&|
\\[-4pt]
&&&&&&&&\underset{4}{\circ}
\end{array}
\]
where $\vdots$ corresponds to some Du Val singularities sitting on $\Xi'$.
Since the whole configuration is contractible to either a Du Val point or a curve,
we have $a_0=2$. Contracting black vertices successively on some step we get
\[
\begin{array}{c@\,c@\,c}
\vdots&\lin&\overset{a'-2}{\circ}
\end{array}
\]
Recall that $\vdots$ is not empty.
Hence $a'=3$ or $4$. By Lemma \ref{IIB-lemma-computations-self-intersections-1}, 
\ref{equation-IIB-discrepansies} we have
$\cd(\Xi_0)=3$, $\cd(\Xi')=3/2$. 
Using \ref{codiscrepancies-IIB} we compute the codiscrepancies of exceptional divisors 
over $\tilde H$:

\[
\begin{array}{c@{\quad }c@\,c@\,c@\,c@\,c@\,c@\,c@\,c@\,c@\,c@\,c@\,c@\,c@\,c@\,c@\,c@\,}
\vdots&\lin&\overset{3/2}{\circ}&\lin&\overset{2}\circ &\lin&\overset{5/2}\circ&\lin&\overset{3}{\circ}
&\lin&\overset{9/4}\circ&\lin&\overset{3/2}\circ&\lin&\overset{3/4}{\circ}&\lin&\bullet
\\[-4pt]
&&&&&&&&|
\\[-4pt]
&&&&&&&&\underset{5/4}{\circ}
\end{array}
\]
If $a'=4$, we get a contradiction as in \ref{IIB-double-subcase-a_23}.
If $a'=3$, then the whole configuration contracts to a curve, i.e.,
$f$ is a $\QQ$-conic bundle.
As in \ref{IIB-double-subcase-a_22} we infer that
the graph $\Delta(H,C)$ has the following form
\[
\begin{array}{c@{}c@{}l@{}c@{}c@{}c@{}c@{}c@{}c@{}c@{}c@{}c@{}c@{}c@{}c@{}c@{}c@{}c@{}c}
\circ&\lin&\overbrace{\circ\lin\cdots\lin\circ}^n&\lin&\overset{3}{\underset{\Xi'}{\circ}}&\lin
&\overset{}\circ &\lin&\overset{}\circ&\lin&\overset{\Xi_0}{\circ}
&\lin&\overset{}\circ&\lin&\overset{}\circ&\lin&\overset{}{\circ}&\lin&\bullet
\\[-6pt]
&&\hspace{1.3pt}|&&&&&&&&|
\\[-3pt]
&&\circ&&&&&&&&\underset{4}{\circ}
\end{array}
\]
where $n\ge 0$. 

We show that $n=0$, that is,
the case \ref{IIB-theorem-conic-bundle-case-double} holds.
As in \ref{general-extension} take a divisor $D$ on $\hat H$ 
whose coefficients are as follows
\[
\begin{array}{c@\,c@\,l@\,c@\,c@\,c@\,c@\,c@\,c@\,c@\,c@\,c@\,c@\,c@\,c@\,c@\,c@\,c@\,c@\,}
\overset{1}\circ&\lin&\overset{2}\circ\lin\cdots\lin\overset{2}\circ&\lin&\overset{2}{\circ}&\lin&\overset{4}\circ 
&\lin&\overset{6}\circ&\lin&\overset{8}{\circ}
&\lin&\overset{8}\circ&\lin&\overset{8}\circ&\lin&\overset{8}{\circ}&\lin&\overset 8\bullet
\\[-4pt]
&&\hspace{1.3pt}|&&&&&&&&|
\\[-4pt]
&&\underset{1}\circ&&&&&&&&\underset{2}{\circ}
\end{array}
\]
Then $D=h^*o$ is a scheme fiber of $h: \hat H\to T$.
There is a member $H'\in |\OOO_X|_C$ such that $H'|_H=g_{H*}g_{1*}D=
f_H^*o$. 
Since $\Xi=4\Xi_0+2\Xi'$, we have $\tilde H'|_{\tilde H}=
g_{1*}D-\Xi=4\Xi_0$. In particular, 
the curve $\Xi'$ is not a component of $\tilde H'|_{\tilde H}$. 
Hence the base locus of the pencil generated by $\tilde H$ and $\tilde H'$
coincides with $\Xi_0$. As in \ref{general-extension} 
a general member $\tilde H_\epsilon$ of this pencil 
meets the curve $\Lambda$ transversely outside of $\Xi_0$.
Note that $\Lambda\cap \Xi_0=\{Q\}$
and the local intersection number of $\Lambda$ and $\tilde H_\epsilon$ at $Q$
equals to $2$.
By Bertini's theorem the proper transform $\tilde H_\epsilon$ of $H_\epsilon$
on $\tilde X$ meets $\Lambda$ transversely along $\Xi'$. 
Since $(\tilde H_\epsilon\cdot \Lambda)_{\tilde X}= (\OOO(4)\cdot \Lambda)_{\PP(3,2,1,1)}=4$,
the intersection $\tilde H_\epsilon\cap \Lambda$ consists of three 
distinct points. Therefore, $\tilde H_\epsilon$ has two Du Val points on 
$\tilde H_\epsilon\cap \Lambda\setminus \Xi_0$. This shows that 
for $H_\epsilon$ the situation of \ref{IIB-theorem-conic-bundle-case-double} holds, 
so the chosen $H$ is not general if $n>0$.

\begin{ssde}{\bf Example.}
Let $H$ be given by the equations
\begin{equation*}
\begin{cases}
 y_1^2-y_2^3+ y_3^2=0,
\\[4pt]
y_1y_3+y_2y_4^2+y_4^{4} =0.
\end{cases}
\end{equation*}
Then a one-parameter deformation of $H$ 
is a $\QQ$-conic bundle as in \ref{IIB-theorem-conic-bundle-case-double}.
\end{ssde}
\end{sde}
\end{de}

\par\medskip\noindent
{\bf Acknowledgments.}
The paper was written during the second author's stay at RIMS, Kyoto University
in February-March 2011. 
The author is very grateful to the institute for 
the invitation, hospitality and nice working environment.



\end{document}